\newcommand{\Z}{\mathbb Z}
\newcommand{\Q}{\mathbb Q}
\newcommand{\R}{\mathbb R}
\newcommand{\C}{\mathbb C}
\newcommand{\X}{\mathbb X}
\newcommand{\F}{\mathcal F}
\renewcommand{\O}{\mathcal O}
\newcommand{\p}{\frak p}
\renewcommand{\H}{\mathbb H}
\newcommand{\B}{\mathcal B}
\newcommand{\PSL}{\operatorname{PSL}}
\newcommand{\Isom}{\operatorname{Isom}}
\newcommand{\SL}{\operatorname{SL}}
\newcommand{\GL}{\operatorname{GL}}
\newcommand{\Gal}{\operatorname{Gal}}
\newcommand{\tr}{\operatorname{tr}}
\newcommand{\snap}{{Snap}}
\newcommand{\snappea}{{SnapPea}} 
\newcommand{\Bloch}{\mathcal B}
\newcommand{\Borel}{\operatorname{Borel}}
\newcommand{\Prebloch}{\mathcal P}
\newcommand{\cs}{\operatorname{cs}}
\newcommand{\vol}{\operatorname{vol}}
\newtheorem{theorem}{Theorem}[section]
\newtheorem*{theorem*}{Theorem}
\newtheorem{conjecture}{Conjecture}
\newtheorem{question}[theorem]{Question}
\newtheorem{lemma}[theorem]{Lemma}
\theoremstyle{definition}
\newtheorem{definition}[theorem]{Definition}
\newtheorem{remark}[theorem]{Remark}
\newtheorem{problem}[theorem]{Problem}
\begin{document}
\title{Realizing arithmetic invariants of hyperbolic $3$--manifolds}
\author{Walter D. Neumann}
\address{Barnard College, Columbia University, New York, NY 10027,
  USA}
\email{neumann@math.columbia.edu}
\thanks{This work was supported by
the NSF. Useful conversations and correspondence with Alan Reid and
Christian Zickert are gratefully acknowledged.}
\subjclass[2000]{57M27}
\maketitle
These are notes based on the course of lectures on arithmetic invariants
of hyperbolic manifolds given at the workshop associated with the last
of three ``Volume Conferences,'' held at Columbia University, LSU
Baton Rouge, and Columbia University respectively in March 2006,
May/June 2007, June 2009.

The first part of the lecture series was expository, and since most of
the material is readily available elsewhere, we move rapidly over it
here 
(the very first lecture was a rapid introduction to algebraic number
theory, here compressed to less than 2 pages, but hopefully sufficient
for the topologist who has never had a course in algebraic number
theory). Section \ref{sec:invariants} on arithmetic invariants has
some new material, while Section \ref{sec:realizing} describes a
question that Alan Reid and the author first asked about 20 years ago,
and describes a very tentative approach. It is here promoted to a
conjecture, in part because the author believes he is safe from
contradiction in his lifetime.

In its simplest form the conjecture says:
\begin{conjecture}
Every non-real concrete
number field $k$ and every quaternion algebra over it arise as the
invariant trace field and invariant quaternion algebra of some
hyperbolic manifold.   
\end{conjecture}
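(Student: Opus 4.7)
The plan is to combine the classical arithmetic realization with deformation-theoretic techniques for the non-arithmetic case. First, I would recall that for any number field $k$ with exactly one complex place and any quaternion algebra $A$ over $k$ ramified at every real place, a maximal order in $A$ yields an arithmetic Kleinian group realizing $(k,A)$ as invariants; this handles a substantial special case and sets the pattern for the general one. For a non-real $k$ with several complex places, this approach fails at once because $\PSL(2,\O_k)$ embedded diagonally is not discrete in $\PSL(2,\C)$, so one must produce non-arithmetic Kleinian groups instead.

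To realize a general $k$, I would begin with a hyperbolic link complement whose character variety is large (for instance a fully augmented link, or a cover of a small-volume manifold with many accidental parabolics) and look for an algebraic point of the character variety where the invariant trace field equals $k$. Near the complete structure, the traces of holonomies are algebraic functions of the Dehn filling parameters, so by choosing those parameters to satisfy suitable polynomial constraints one hopes to engineer the trace field at will. Alternatively, one could try cut-and-paste or covering constructions to enlarge $k_0$ stepwise to $k$. The key input from deformation theory is that the assignment of invariant trace field to a deformation is an algebraic/Galois-theoretic operation, hence potentially surjective onto all non-real number fields if the starting character variety is rich enough.

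The subtler invariant is the quaternion algebra $A$, which by the standard description is determined by a Hilbert symbol built from traces of two non-commuting elements of the Kleinian group. Even within a fixed $k$, different $A$ arise depending on ramification at finite primes, and this ramification set is far harder to prescribe by topological moves than the trace field itself; operations like mutation in fact preserve $(k,A)$, showing the coupling. I expect the genuine obstacle to lie precisely here: matching $(k,A)$ simultaneously, not just $k$, will likely require a fundamentally new construction — perhaps one that passes directly from the arithmetic data $(k,A)$ to a discrete subgroup of $\PSL(2,\C)$ by a non-arithmetic discreteness mechanism — rather than a topological perturbation of a known manifold. This is, I take it, why the author of the excerpt describes his present approach as ``very tentative.''
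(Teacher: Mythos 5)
The statement you are addressing is an open conjecture, and the paper does not prove it; it only proposes a ``very tentative'' first step. So the right standard here is not whether your outline matches the paper's proof (there is none), but whether it contains a workable argument --- and it does not. The genuine gap is in your middle step: you assert that by deforming or Dehn filling a link complement with a large character variety one ``hopes to engineer the trace field at will,'' but no mechanism is offered, and none is known. The traces at a filled or deformed structure are algebraic numbers lying in fields determined by the algebraic geometry of the character variety of the \emph{fixed} starting manifold; there is no known way to force the resulting invariant trace field to equal a prescribed $k$, and the paper explicitly records that the only general realization result is for non-real multi-quadratic extensions of $\Q$ (Reid--Neumann, via \cite{mr}). ``Potentially surjective if the starting variety is rich enough'' is a hope, not an argument, and your closing admission that matching the quaternion algebra $A$ ``will likely require a fundamentally new construction'' concedes that the proposal does not establish the conjecture even at the level of strategy. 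Your first paragraph (the arithmetic case: $k$ with one complex place, $A$ ramified at all real places, an order $\O$ and $\O^1$ giving an arithmetic Kleinian group) is correct but is exactly the classical special case recalled in the paper's Section \ref{subsec:arith}; it does not extend to fields with $r_2>1$, as you note.

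For comparison, the paper's tentative program is quite different from your deformation-theoretic one: starting from the prescribed pair $(k,A)$ it builds the arithmetic quotient $Y=\X/\Gamma$ of a product of copies of $\H^3$ and $\H^2$ (one $\H^3$ factor per complex place, one $\H^2$ per unramified real place), equipped with a transversally hyperbolic codimension-3 foliation $\F$ coming from the ``concrete'' complex place, and proves an exact reduction: finite-volume hyperbolic $3$-manifolds immersed transverse to $\F$ have invariant trace field and quaternion algebra embedding in $(k,A)$ with integral traces, and conversely every such manifold arises this way up to commensurability (with Bass--Serre tree factors added to drop the integrality condition). The conjecture is thereby reduced to the existence of transverse immersed $3$-manifolds in $Y$, which remains open; this reduction handles your ``subtler invariant'' $A$ automatically, since $A$ is built into the construction of $Y$, whereas in your approach prescribing the finite ramification of $A$ by topological moves is, as you yourself observe, the unresolved crux.
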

With an excess of optimism, one might add to the
conjecture that, moreover, every set of primes of $\O_k$ arises as the
set of primes in denominators in the invariant trace ring of one of
these hyperbolic manifolds.

Section \ref{sec:realizing} describes the already mentioned tentative
first step for a program of proof, which the author has revisited
over many years without significant advance.

We discuss also the question whether the Bloch invariants of
manifolds with a given invariant trace field $k$ generate the Bloch
group $\B(k)$ for that number field, or even whether their extended Bloch
invariants generate $K_3^{\text{\rm ind}}(k)$.

\section{Notation and terminology for algebraic number theory}  
\subsection{Number fields}
A \emph{number field} $K$ is a finite extension of $\Q$. That is, $K$
is a field containing $\Q$, and finite-dimensional as a vector space
over $\Q$. This dimension $d$, denoted $d=[K:\Q]$, is the
\emph{degree} of the number field. $K$ has exactly $d$ embeddings into
the complex numbers,
$$\theta_i\colon K\to \C,\qquad i=1,\dots,d=r_1+2r_2\,,$$
where $r_1$ is the number of them with real image, and the remaining
embeddings come in $r_2$ complex conjugate pairs. 
Indeed, the ``Theorem of the Primitive Element'' implies that $K$ is
generated over $\Q$ by a single element, from which if follows that
$K\cong\Q[x]/(f(x))$ with $f(x)$ an irreducible polynomial of degree
$d$; the embeddings $K\to\C$ arise by mapping the generator $x$ of
$K$ to each of the $d$ zeros in $\C$ of $f(x)$.

A \emph{concrete number field} is a number field $K$ with a chosen
embedding into $\C$, i.e., $K$ given as a subfield of $\C$.  The union
of all concrete number fields is the field of algebraic numbers in
$\C$, which is the concrete algebraic closure $\overline \Q\subset\C$
of $\Q$.

An \emph{algebraic integer} is a zero of a monic polynomial with
rational integer coefficients. The algebraic integers in $K$ form a
subring $\O_K\subset K$, the \emph{ring of integers of $K$}. It is a
Dedekind domain, which is to say that any ideal in $\O_k$ factors
uniquely as a product of prime ideals.  Each prime ideal $\p$ 
(or ``prime'' for short) 
of $\O_K$ is a divisor of a unique ideal $(p)$ with $p\in \Z$ a
rational prime (determined by $|\O_K/\p|=p^e$ for some $e>0$). The
factorization of $(p)$ as a product $(p)=\p_1^{f_1}\dots\p_k^{f_k}$ of
primes of $\O_K$ follows patterns which can be found in any text on
algebraic number theory. In particular, the exponents $f_i$ are $1$
for all but a finite number of primes $\p$ of $\O_K$, which are called
\emph{ramified}.

For the ring of integers $\O_\Q=\Z$ of $\Q$, every ideal is principal,
and the factorization of the ideal $(n)$ into a product of ideals
$(p_i)$ expresses the familiar unique prime factorization of rational
integers. In general $\O_K$ is a unique factorization domain (UFD) if
and only if it is a PID (every ideal is principal), which is somewhat
rare. It is presumed to happen infinitely often, but this is not
proven.

Given a prime $\p$ of $\O_k$, there is a multiplicative norm
$||.||_\p$ defined for $a\in\O_K$ by $||a||_\p:=c^{-r}$, where $\p^r$
is the largest power of $\p$ which ``divides'' $a$ (i.e., contains
$a$) and $c>1$ is some constant\footnote{The value of $c$ is
  unimportant for topological considerations but is standardly taken
  as $c=N(\p):=|\O_K/\p|$}; the norm is then determined for arbitrary
elements of $K$ by the multiplicative property
$||ab||_\p=||a||_\p||b||_\p$. This norm determines a translation
invariant topology on $K$ and the completion of $K$ in this topology
is a field denoted $K_\p$. The unit ball around $0$ in $K_\p$ is its \emph{ring
  of integers $\O_{K_\p}$}, and the open unit ball is the unique
maximal ideal in this ring.  The norm $||.||_\p$ is
\emph{non-Archimedean}, i.e., it satisfies the strong triangle
inequality $||a+b||_\p\le \max(||a||_\p,||b||_\p)$.  Up to equivalence
(norms are \emph{equivalent} if one is a positive power
of the other), the only non-Archimedean multiplicative norms are the
ones just described, and the only other multiplicative norms on
$K$ are the norms $||a||_\theta:=|\theta(a)|$ given by absolute value
in $\C$ for an embedding $\theta\colon K\to \C$. The completion of $K$
in the topology induced by one of these is $\R$ or $\C$ according as
the image of $\theta$ lies in $\R$ or not.

The fields $\R$, $\C$, $K_\p$ arising from completions are \emph{local
  fields}\footnote{The definition of local field is: non-discrete
  locally compact topological field. The ones mentioned here are all
  that exist in characteristic $0$.}. The name is geometrically
motivated: one thinks of $\O_K$ as a ring of functions on a ``space''
with a ``finite point'' for each prime ideal, plus $r_1+r_2$
``infinite points'' corresponding to the embeddings in $\R$ and $\C$;
``local'' means focusing on an individual point. One therefore refers
to an embedding of $K$ into $K_p$ as a ``finite place'' and an
embedding into $\R$ or $\C$ as an ``infinite place,'' and if an object
$A$ associated with $K$ (e.g., an algebra $A$ over $K$) has
corresponding objects associated to each place (e.g., $A\otimes K_\p$,
$A\otimes\R$, $A\otimes\C$) then a ``property of $A$ at the (finite or
infinite) place'' means that property for the associated object. We
stress that an ``infinite place'' refers to the embedding of $K$ in
$\C$ up to conjugation (even though conjugate embeddings may have
different images), so there are $r_1$ real places and just $r_2$
complex places.

\subsection{Quaternion algebras}
References for this section are \cite{vigneras} and \cite{cghn}. A
\emph{quaternion algebra} over a field $K$ is a simple algebra over
$K$ of dimension $4$ and with center $K$. The simplest example is the
algebra $M_2(K)$ of $2\times2$ matrices over $K$. This is the only
quaternion algebra up to isomorphism for $K=\C$. For $K=\R$ there are exactly two,
namely $M_2(\R)$ and the Hamiltonian quaternions. The situation for
the non-Archimedean local fields $K_\p$ is similar: there are exactly
two quaternion algebras over each of them, one being the trivial one
$M_2(K_\p)$ and the other being a division algebra.  In each case the
trivial quaternion algebra $M_2$ is called \emph{unramified} and the
division algebra is called \emph{ramified}. For a number field $K$ the
classification of quaternion algebras over $K$ is as follows:
\begin{theorem}[Classification]
  A quaternion algebra $E$ over $K$ is ramified at only finitely many
  places (i.e., only finitely many of the $E\otimes K_\p$ and
  $E\otimes\R$'s are division algebras) and is determined up to
  isomorphism by the set of these ``ramified places.'' The number of
  ramified places is always even, and every set of places of $K$ of
  even size arises as the set of ramified places of a quaternion
  algebra over $K$.
\end{theorem}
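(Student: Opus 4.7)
The plan is to derive the classification from the Hasse--Brauer--Noether--Albert (HBNA) theorem for the Brauer group $\operatorname{Br}(K)$. Recall that $\operatorname{Br}(K)$ classifies central simple $K$-algebras up to Brauer equivalence, that two quaternion algebras over $K$ are isomorphic iff Brauer equivalent (they have the same dimension), and that any quaternion algebra represents a $2$-torsion class. The HBNA theorem asserts exactness of
$$0 \to \operatorname{Br}(K) \to \bigoplus_{v} \operatorname{Br}(K_v) \xrightarrow{\Sigma} \Q/\Z \to 0,$$
where $v$ runs over all places of $K$, and the local Brauer groups are $\operatorname{Br}(\C)=0$, $\operatorname{Br}(\R) \cong \tfrac12\Z/\Z$, and $\operatorname{Br}(K_\p) \cong \Q/\Z$ for finite $\p$, with the unique order-$2$ element in each case corresponding to the ramified quaternion algebra over that local field.

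Granting HBNA, the four assertions of the theorem reduce to bookkeeping. First, any element of the direct sum has finite support, so $E$ is ramified at only finitely many places. Second, since $2[E]=0$ each local invariant is $0$ or $1/2$, and the vanishing of $\Sigma$ forces the number of $1/2$'s, i.e.\ of ramified places, to be even. Third, injectivity of the first map shows that $[E]$, and hence $E$ itself, is determined by its tuple of local invariants, which is precisely the ramification set. Fourth, given any even set $S$ of places, the tuple placing $1/2$ at each $v\in S$ and $0$ elsewhere sums to $0$ in $\Q/\Z$, so by surjectivity lifts to some $[A]\in\operatorname{Br}(K)$; this class is $2$-torsion, and over a number field the period of a Brauer class equals its Schur index, so $[A]$ is represented by a central simple algebra of dimension $4$, i.e.\ a quaternion algebra with exactly the prescribed ramification.

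The main obstacle is HBNA itself: injectivity is the Albert--Brauer--Hasse--Noether local-global principle, and surjectivity is Artin reciprocity, i.e.\ global class field theory. Both are substantial and lie outside what a short proof sketch could reasonably reproduce. However, both are developed in precisely the quaternion-algebra language in \cite{vigneras}, so my proposed writeup would cite HBNA from there and then carry out steps (i)--(iv) above, taking particular care in step (iv) to invoke the period-equals-index equality over number fields, which is what guarantees that the lifted Brauer class is realized by a quaternion, rather than a higher-dimensional, central simple algebra.
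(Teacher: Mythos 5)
The paper does not prove this theorem at all: it is quoted as the classical classification of quaternion algebras over number fields, with the proof delegated to the cited references (Vign\'eras \cite{vigneras}, and \cite{cghn} for the computational side). Your route through the Hasse--Brauer--Noether--Albert exact sequence
$0 \to \operatorname{Br}(K) \to \bigoplus_v \operatorname{Br}(K_v) \to \Q/\Z \to 0$
is exactly the standard argument those references rest on, and your four bookkeeping steps (finite support, evenness from the sum of local invariants, injectivity giving determination by the ramification set, surjectivity giving realization) are correct. Two small refinements. First, in step (iv) you do not need the general period-equals-index theorem: for a $2$-torsion class it suffices to know that such a class is represented by a quaternion algebra, and in Vign\'eras' treatment one bypasses even that by directly constructing a Hilbert symbol $\bigl\{\!\frac{a,b}K\!\bigr\}$ with prescribed ramification, using approximation and the existence of suitable primes (a Dirichlet-type argument); citing period-equals-index is legitimate but heavier than necessary. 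Second, your construction silently (and correctly) requires the prescribed set $S$ to avoid complex places, since $\operatorname{Br}(\C)=0$ means no local invariant $1/2$ can be placed there; the theorem as stated in the paper is loose on this point, and your writeup should make the restriction to non-complex places explicit rather than inherit the imprecision.
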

A quaternion algebra $E$ over $K$ can always be given in terms of
generators and relations in the form
$$E=K\langle {i}, {j}:{i}^2=\alpha,~{j}^2=\beta,~ij=-ji\rangle\,,$$
with $\alpha,\beta\in K^*$. The \emph{Hilbert symbol} notation
$\bigl\{\!\frac{\alpha,\beta}K\!\bigr\}$ refers to this quaternion
algebra. For example, $\bigl\{\!\frac{-1,-1}\R\!\bigr\}$ is Hamilton's
quaternions, and $\bigl\{\!\frac{1,\beta}K\!\bigr\}=M_2(K)$ for any
$K$. The Hilbert symbol for a given quaternion algebra is far from
unique, but computing the ramification---and hence the isomorphism
class---of a quaternion algebra from the Hilbert symbol is not hard,
and is described in \cite{vigneras}, see also \cite{cghn} for a
description tailored to 3-manifold invariants.

In terms of the above presentation, the map $i\mapsto -i, j\mapsto -j,
ij\mapsto -ij$ of a quaternion algebra $E$ to itself is an
anti-automorphism called conjugation, and the \emph{norm} of
$x=a+ib+jc+ijd\in E$ is defined as $N(x):=x\bar x=a^2+\alpha b^2+\beta
c^2+\alpha\beta d^2$.

\subsection{Arithmetic subgroups of $\SL(2,\C)$ and $\PSL(2,\C)$}
\label{subsec:arith}
For a quaternion algebra $E$ over $K$ 
the set $\O_E$ of integers of $E$ (elements
which are zeros of monic polynomials with coefficients in $\O_K$) does
not form a subring. One considers instead an \emph{order}
in $E$: any subring $\O$ of $E$, contained in $\O_E$ and
containing $\O_K$ and of rank 4 over $\O_K$. $E$ has infinitely many
orders; we just pick one of them.

The subset $\O^1\subset \O$ of elements of norm 1 is a subgroup.  At
any complex place, $E$ becomes $E\otimes\C=M_2(\C)$ and $\O^1$ becomes
a subgroup of $\SL(2,\C)$, while at an unramified real place $E$ becomes
$E\otimes\R=M_2(\R)$ and $\O^1$ becomes a subgroup of $\SL(2,\R)$. We
thus get an embedding of $\Gamma:=\O^1/\{\pm1\}$
$$\Gamma\subset 
\prod_{i=1}^{r_2}\PSL(2,\C)
\times
\prod_{j=1}^{r_1^u}\PSL(2,\R)\,,
$$ 
where $r_1^u$ is the number of unramified real places of $K$.  This
subgroup is a lattice (discrete and of finite covolume).

If $r_1^u=0$ and $r_2=1$ this gives an arithmetic subgroup of
$\PSL(2,\C)$, and similarly for $r_1^u=1, r_2=0$ and $\PSL(2,\R)$. Up
to commensurability this group only depends on $E$ and not on the
choice of order $\O$. Any subgroup commensurable with an arithmetic
subgroup---i.e., sharing a finite index subgroup with it up to
conjugation---is, by definition, also arithmetic.

The general definition of an arithmetic group is in terms of the set
of $\Z$-points of an algebraic group which is defined over $\Q$.  But Borel
shows in \cite{borel} that all arithmetic subgroups of $\PSL(2,\C)$
(and $\PSL(2,\R)$) can be obtained as above.

Arithmetic orbifolds (orbifolds $\H^3/\Gamma$ with $\Gamma$
arithmetic) are very rare---there are
only finitely many of bounded volume---but surprisingly common among
the manifolds and orbifolds of smallest volume.
\section{Arithmetic invariants of hyperbolic manifolds}\label{sec:invariants}

\subsection{Invariant trace field and quaternion algebra}
A \emph{Kleinian group} $\Gamma$ is a discrete subgroup of
$\PSL(2,\C)=\Isom^+(\H^3)$ for which $M=\H^3/\Gamma$ is finite volume
($M$ may be an orbifold).
Let $\overline {\Gamma} \subset \SL(2,\C )$ be the inverse image of
$\Gamma$ under the projection $\SL(2,\C)\to \PSL(2,\C)$.

\begin{definition}
  The \emph{trace field} of $\Gamma$ (or of $M=\H^3/\Gamma$) is the
  field $\tr(\Gamma)$ generated by all traces of elements of
  $\overline \Gamma$. We also write $\tr(M)$.

  The \emph{invariant trace field} is the field
  $k(\Gamma):=\tr(\Gamma^{(2)})$ where ${\Gamma}^{(2)}$ is the group
  generated by squares of elements of ${\Gamma}$. It can also be
  computed as $k(\Gamma )=\Q (\{ (\tr(\gamma))^2\mid\gamma\in\overline
  {\Gamma }\} )$ (\cite{reid}, see also \cite{nr}). We also write 
  $k(M)$.

The \emph{invariant quaternion algebra} is the $k(\Gamma
)$-subalgebra of $M_2(\C )$ ($2\times 2$ matrices over $\C$)
generated over $k(\Gamma )$ by the elements of $\overline \Gamma^{(2)}$.
It is denoted  $A(\Gamma)$ or $A(M)$.
\end{definition}
\begin{theorem} $k(\Gamma )$ and $A(\Gamma)$ are
  commensurability invariants of $\Gamma$.\par
\end{theorem}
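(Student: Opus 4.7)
The plan is to reduce to a finite-index normal inclusion $\Gamma' \lhd \Gamma$ and exploit the alternative characterization $k(\Gamma)=\Q(\{(\tr\gamma)^2 : \gamma \in \overline\Gamma\})$ recorded in the definition. Since traces are conjugation-invariant, both $k(\Gamma)$ and (up to the obvious re-interpretation) the subalgebra $A(\Gamma) \subset M_2(\C)$ are unchanged under conjugating $\overline\Gamma$. Thus commensurability reduces to showing $k(\Gamma)=k(\Gamma')$ and $A(\Gamma)=A(\Gamma')$ whenever $\Gamma' \subset \Gamma$ has finite index; by passing to the normal core of $\Gamma'$ in $\Gamma$ we may further assume $\Gamma' \lhd \Gamma$ with $[\Gamma:\Gamma']=N$.

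For trace fields, one direction is transparent: every generating square $g^2$ of $\Gamma'^{(2)}$ has $g \in \Gamma' \subset \overline\Gamma$, so $\Gamma'^{(2)} \subset \Gamma^{(2)}$ and hence $k(\Gamma') \subset k(\Gamma)$. For the reverse inclusion, normality gives $\gamma^N \in \Gamma'$ for every $\gamma \in \overline\Gamma$, so $\gamma^{2N}=(\gamma^N)^2 \in \Gamma'^{(2)}$ and $\tr(\gamma^{2N}) \in k(\Gamma')$. Applying the $\SL(2,\C)$ identity $\tr(A^2)=(\tr A)^2-2$ to $A=\gamma^N$ yields
$$(\tr \gamma^N)^2 \;=\; \tr(\gamma^{2N})+2 \;\in\; k(\Gamma').$$
Now $\tr(\gamma^N)$ is a Chebyshev-type polynomial in $\tr\gamma$ whose monomials all have the parity of $N$, so its square is a polynomial in $(\tr\gamma)^2$ with integer coefficients. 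To promote this single polynomial relation into the conclusion $(\tr\gamma)^2 \in k(\Gamma')$, I would run the same argument for $\gamma$, $\delta$, and $\gamma\delta$ simultaneously, using the Fricke identity $\tr(A)\tr(B)=\tr(AB)+\tr(AB^{-1})$ to cascade through products until $(\tr\gamma)^2$ is isolated as a rational expression in traces of elements of $\Gamma'^{(2)}$; this is essentially Reid's argument in \cite{reid}.

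Once $k := k(\Gamma)=k(\Gamma')$ is established, the quaternion-algebra statement follows quickly. The inclusions $\Gamma'^{(2)} \subset \Gamma^{(2)}$ and the equality of coefficient fields give $A(\Gamma') \subset A(\Gamma)$ as $k$-subalgebras of $M_2(\C)$. Both are $4$-dimensional over $k$: the groups $\Gamma^{(2)}$ and $\Gamma'^{(2)}$, being finite-index subgroups of the non-elementary Kleinian group $\Gamma$, are Zariski-dense in $\SL(2,\C)$, so their $\C$-linear spans are all of $M_2(\C)$, forcing the $k$-algebras they generate to be quaternion algebras over $k$. An inclusion of $k$-subalgebras of $M_2(\C)$ of equal finite $k$-dimension is an equality, giving $A(\Gamma)=A(\Gamma')$.

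The main obstacle is clearly the nontrivial inclusion $k(\Gamma) \subset k(\Gamma')$: the relation $(\tr \gamma^N)^2 \in k(\Gamma')$ shows only that $(\tr\gamma)^2$ is \emph{algebraic} over $k(\Gamma')$, not that it lies there. Honest extraction of $(\tr\gamma)^2$ requires juggling many such relations—one for each coset of $\Gamma/\Gamma'$ and for enough pairs $(\gamma,\delta)$—through the trace identities, which is delicate but purely algebraic once the group-theoretic input (normality and finite index, giving a uniform exponent $N$ for $\Gamma/\Gamma'$) is in place.
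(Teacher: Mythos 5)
There is no proof to compare against in the paper itself: the statement is quoted as a known theorem of Reid (\cite{reid}, see also \cite{nr} and \cite{mr}), so your argument has to stand on its own, and as written it does not. The entire content of the theorem is the inclusion $k(\Gamma)\subseteq k(\Gamma')$ for a finite-index (normal) subgroup $\Gamma'$, and that is precisely the step you leave open. From $\gamma^N\in\Gamma'$ you correctly get $(\tr\gamma^N)^2=\tr(\gamma^{2N})+2\in k(\Gamma')$, but since $\tr(\gamma^N)=p_N(\tr\gamma)$ for a fixed integer (Chebyshev-type) polynomial $p_N$, this only says that $(\tr\gamma)^2$ satisfies a degree-$N$ polynomial over $k(\Gamma')$, i.e.\ is algebraic over it --- as you yourself acknowledge. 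The promised repair (``run the same argument for $\gamma$, $\delta$, $\gamma\delta$ and cascade through the Fricke identity until $(\tr\gamma)^2$ is isolated'') is never exhibited, and nothing in what you wrote forces the degree down to one: every relation produced this way is again of the shape ``an integer polynomial in squared traces of elements of $\Gamma$ lies in $k(\Gamma')$''. Deferring at this point to ``essentially Reid's argument in \cite{reid}'' is circular, since Reid's paper is exactly the reference for the theorem being proved.

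For comparison, the published argument (\cite{reid}; \cite{mr}, Theorem 3.3.4) does not pass through traces of high powers of $\gamma$ at all --- replacing $\gamma$ by $\gamma^N$ is where the information is irretrievably lost, because $p_N$ is not invertible. In outline, one works directly with $\tr(\gamma^2)$, exploiting that $(\Gamma')^{(2)}$ is still a finite-index, non-elementary (and, after passing to the core, normal) subgroup, and uses trace identities such as $\tr A\,\tr B=\tr(AB)+\tr(AB^{-1})$ with suitably chosen auxiliary elements of that subgroup to exhibit $\tr(\gamma^2)$ as a rational expression in traces that manifestly lie in $k(\Gamma')$. Your remaining steps --- conjugation invariance, reduction to the normal core, the easy inclusion $k(\Gamma')\subseteq k(\Gamma)$ from $(\Gamma')^{(2)}\subseteq\Gamma^{(2)}$, and the deduction $A(\Gamma')=A(\Gamma)$ once the fields agree --- are fine, though for the last you should also justify the upper bound $\dim_k A(\Gamma')\le 4$ (the algebra is spanned over $k$ by $1,g,h,gh$ for an irreducible pair $g,h$, using that the relevant traces lie in $k$); Zariski density only gives the lower bound.
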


If $\Gamma$ is arithmetic, then $k(\Gamma )$ and $A(\Gamma )$ equal
the defining field and defining quaternion algebra of $\Gamma$, so
they form a complete commensurability invariant. They are not a
complete commensurability invariant in the non-arithmetic case.  

An obvious necessary condition for arithmeticity is that $k(\Gamma )$
have only one non-real complex embedding (it always has at least one).
Necessary and sufficient is that in addition all traces should be
algebraic integers and $A(\Gamma)$ should be ramified at all real
places of $k$. See \cite{reid}. Equivalently, each $\gamma\in
\overline {\Gamma}$ ${\rm trace}(\gamma^2)$ should be an algebraic
integer whose absolute value at all real embeddings of $k$ is bounded
by $2$. 

These invariants are already quite powerful invariants of a hyperbolic
manifold. For example, if a hyperbolic manifold $M$ is commensurable
with an amphichiral manifold $N$ (i.e., $N$ has an orientation
reversing self-homeomorphism) then $k(M)=\overline{k(M)}$ and
$A(M)=\overline{A(M)}$ (complex conjugation).

If $M$ has cusps then the invariant quaternion algebra is always
unramified, so it gives no more information than the invariant trace
field, but for closed $M$ unramified invariant quaternion algebras are
uncommon; for example among the almost $40$ manifolds in the Snappea
closed census \cite{snappea} which have invariant trace field
$\Q(\sqrt{-1})$, only two have unramified quaternion algebra.

\subsection{The PSL-fundamental class}

For details on what we discuss here see \cite{extbloch,
neumann-yang2, neumann-yang3} or the expository article
\cite{neumann-hilbert}. 

\subsubsection{PSL-fundamental class of a hyperbolic manifold} The
\emph{PSL-fundamental class} of 
$M$ is a
homology class 
$$[M]_{PSL}\in H_3(\PSL(2,\C)^\delta;\Z)\,,$$
where the superscript $\delta$ means ``with discrete topology''.

This class is easily described if $M$ is compact. Write $M=\H^3/\Gamma$ with
$\Gamma\subset\PSL(2,\C)$. The PSL-fundamental class is the image of
the fundamental class of $M$ under the map
$H_3(M;\Z)=H_3(\Gamma;Z)\to H_3(\PSL(2,\C)^\delta;\Z)$, where the
first equality is because  $M$ is a $K(\Gamma,1)$-space.
If $M$ has cusps one obtains first a class in
$H_3(\PSL(2,\C)^\delta,P;\Z)$, where $P$ is a maximal parabolic
subgroup of $\PSL(2,\C)^\delta$. One then uses a natural splitting
of the map $$H_3(\PSL(2,\C)^\delta;\Z)\to H_3(\PSL(2,\C)^\delta,P;\Z)$$
to get $[M]_{\PSL}$. This was described in \cite{extbloch}
and proved carefully by Zickert in \cite{zickert1}, who shows that the
class in $H_3(\PSL(2,\C)^\delta,P;\Z)$ depends on choices of horoballs
at the cusps, but the image $[M]_{\PSL}\in H_3(\PSL(2,\C)^\delta;\Z)$
does not.

The group $\Gamma\subset \PSL(2,\C)$ can be conjugated to lie in
$\PSL(2,K)$ for a number field $K$ (which can always be chosen to be a
quadratic extension of the trace field, but there is generally no
canonical choice), so the PSL-fundamental class is then defined in
$H_3(\PSL(2,K);\Z)$.

The following theorem, which holds also with $\PSL$ replaced by $\SL$,
summarizes results of various people, see \cite{neumann-yang3} and
\cite{zickert2} for more details.

\begin{theorem}\label{various results}
$H_3(\PSL(2,\C);\Z)$ is the direct sum of
its torsion subgroup, isomorphic to $\Q/\Z$, and an
infinite dimensional $\Q$ vector space.

If $k\subset\C$ is a number field then
$H_3(\PSL(2,k);\Z)$ is the direct sum of 
its torsion subgroup and
$\Z^{r_2}$, where $r_2$ is the number of conjugate pairs of
complex embeddings of $k$.
Moreover, the map $H_3(\PSL(2,k);\Z)\to H_3(\PSL(2,\C);\Z)$
has torsion kernel.
\end{theorem}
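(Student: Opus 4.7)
The plan is to derive the structure of $H_3(\PSL(2,\C);\Z)$ from its identification with the Bloch group $\B(\C)$ up to a cyclotomic torsion piece, and to obtain the number-field statement by combining Suslin's identification of indecomposable $K_3$ with the extended Bloch group and Borel's rank theorem.

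First I would recall the Bloch-Wigner / Dupont-Sah exact sequence
\[
0 \to \Q/\Z \to H_3(\PSL(2,\C);\Z) \to \B(\C) \to 0,
\]
constructed by sending a hyperbolic $3$-cycle to its cross-ratio class in $\Prebloch(\C)$, whose image lies in the Bloch group; the kernel is generated by roots of unity and equals $\Q/\Z$. Next I would observe that $\B(\C)$ is uniquely divisible (one can extract roots at the level of $\Prebloch(\C)$ because $\C$ is algebraically closed), so it is naturally a $\Q$-vector space and the extension splits canonically, giving
\[
H_3(\PSL(2,\C);\Z) \cong \Q/\Z \oplus \B(\C).
\]
That $\B(\C)$ is infinite-dimensional over $\Q$ follows from surjectivity (mod $\pi^2\Z$) of the Bloch-Wigner dilogarithm $\B(\C)\to\R$ combined with a cardinality argument that forces continuum-many $\Q$-independent classes.

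For a number field $k\subset\C$ I would invoke Suslin's theorem identifying $H_3(\SL(2,k);\Z)$, modulo an easily controlled piece and up to the $\PSL$ vs.\ $\SL$ comparison, with the extended Bloch group $\widehat{\B}(k)$, together with the identification $\widehat{\B}(k)\cong K_3^{\text{\rm ind}}(k)$. By Borel's theorem, $K_3^{\text{\rm ind}}(k)$ is the sum of a finite torsion group and a free abelian group of rank $r_2$. Because the non-torsion quotient is finitely generated free, the short exact sequence
\[
0\to T\to H_3(\PSL(2,k);\Z)\to \Z^{r_2}\to 0
\]
splits, yielding the claimed direct-sum decomposition.

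For the torsion-kernel assertion I would chase the free part through the Bloch group: the summand $\Z^{r_2}$ is detected by the $r_2$ Bloch-Wigner dilogarithm values coming from the complex embeddings $\theta_1,\dots,\theta_{r_2}\colon k\hookrightarrow\C$, and Borel's theorem on the regulator says these $r_2$ real numbers are $\Q$-linearly independent. Hence no nonzero integer combination of the generators of $\Z^{r_2}$ can vanish even rationally in $\B(\C)$, so the kernel of $H_3(\PSL(2,k);\Z)\to H_3(\PSL(2,\C);\Z)$ consists only of torsion.

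The main obstacle in filling in this plan is aligning three separate shifts with sufficient naturality: from $\PSL(2,\cdot)$ to $\SL(2,\cdot)$, from $\SL(2,\cdot)$ to stable $\SL$ (Suslin stability), and from $H_3$ of stable $\SL$ to its indecomposable quotient $K_3^{\text{\rm ind}}$. Once these compatibilities are pinned down, the deep inputs — Borel regulator injectivity and Suslin's theorem — do the real work, and the remaining ingredients (divisibility of $\B(\C)$ and splitting of extensions with finitely generated free quotient) are essentially formal.
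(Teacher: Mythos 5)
The paper itself offers no proof of this theorem: it is stated as a summary of results of others (Dupont--Sah, Suslin, Borel, Neumann--Yang, Zickert), and your overall architecture --- Bloch--Wigner sequence plus divisibility of $\B(\C)$ for the first statement, Suslin's identification with $K_3^{\text{\rm ind}}$ plus Borel's rank computation for the second, and regulator injectivity for the third --- is indeed the standard route taken in the cited references. However, two of your steps are genuinely flawed as written. First, your justification of infinite-dimensionality of $\B(\C)$ rests on surjectivity of the Bloch--Wigner volume map $\B(\C)\to\R$ (it is $\R$-valued, not valued mod $\pi^2\Z$, by the way). Surjectivity holds on $\Prebloch(\C)$, but on the Bloch group it is not known: if $\vol\colon\B(\C)\to\R$ were onto, $\B(\C)$ and hence $H_3(\PSL(2,\C)^\delta;\Z)$ would be uncountable, settling (negatively) the Rigidity Conjecture stated in this very paper. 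The standard fix is to deduce infinite-dimensionality from the number-field statements themselves: the images of $\B(k)$ for fields $k$ with arbitrarily large $r_2$ give free subgroups of arbitrarily large rank in $\B(\C)$, so no cardinality argument is needed.

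Second, the torsion-kernel argument has a real gap. Borel's theorem (Theorem \ref{borel theorem}) asserts injectivity modulo torsion of the full regulator $(\vol_1,\dots,\vol_{r_2})$; it does not say that the $r_2$ real numbers attached to a single class are $\Q$-linearly independent, and that statement is false in general (consider a class fixed by a Galois automorphism permuting two complex places). More importantly, if $x$ dies in $H_3(\PSL(2,\C);\Z)$, this directly kills only the one coordinate $\vol_1$ attached to the concrete embedding, and a full lattice in $\R^{r_2}$ need not avoid the hyperplane $\vol_1=0$; the coordinates $\vol_j$ for $j\ge 2$ are not functions of the image of $x$ under the chosen embedding. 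The missing step is to extend each embedding $\sigma_j\colon k\to\C$ to an automorphism $\tau_j$ of $\C$, so that $(\sigma_j)_*=(\tau_j)_*(\sigma_1)_*$ on $H_3$; then $(\sigma_1)_*x=0$ forces $(\sigma_j)_*x=0$ and hence $\vol_j(x)=0$ for every $j$, and only then does Borel injectivity give that $x$ is torsion. (Alternatively, one can argue via transfers that $K_3^{\text{\rm ind}}(k)\to K_3^{\text{\rm ind}}(\C)$ has torsion kernel.) Finally, the $\SL$/$\PSL$/stabilization comparisons you defer are exactly where the torsion bookkeeping lives; the paper leaves these to \cite{neumann-yang3} and \cite{zickert2}, so deferring them is reasonable, but they are not merely formal.
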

The \emph{Rigidity Conjecture}, which is about 30 years old (see
\cite{neumann-hilbert} for a discussion), posits
that each of the following equivalent statements is true:

\begin{conjecture}
{\rm(1)}~~$H_3(\PSL(2,\C)^\delta;\Z)$ is countable. \\
{\rm(2)}~~$H_3(\PSL(2,\overline{\Q})^\delta;\Z)=H_3(\PSL(2,\C)^\delta;\Z)$\\
{\rm(3)}~~$H_3(\PSL(2,\C)^\delta;\Z)$ is the union of the images
  of the maps $H_3(\PSL(2,K);\Z)\to H_3(\PSL(2,\C)^\delta;\Z)$, as $K$
  runs through all concrete number fields.
\end{conjecture}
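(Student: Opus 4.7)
My plan is first to verify the equivalence of (1), (2), (3) and then to describe a strategy for attacking one of the resulting statements. The easy implications (3) $\Rightarrow$ (2) $\Rightarrow$ (1) are set-theoretic: every concrete number field embeds in $\overline \Q$, and $\PSL(2,\overline \Q)$ is a countable group whose bar complex has countably many simplices in each dimension, hence countable integral homology. For (2) $\Rightarrow$ (3), any $3$-cycle in the bar complex of $\PSL(2,\overline \Q)$ involves only finitely many matrices, and its class is carried by $\PSL(2,K)$ for the concrete number field $K$ obtained by adjoining those matrix entries to $\Q$. The substantive equivalence (1) $\Rightarrow$ (2) I would handle by a cardinality-plus-rigidity argument: any class $[c]\in H_3(\PSL(2,\C)^\delta;\Z)$ is carried by $\PSL(2,F)$ for some finitely generated $F\subset\C$, and if $F$ has positive transcendence degree over $\overline \Q$ then the uncountable group $\operatorname{Aut}(\C/\overline \Q)$ acts on the embeddings $F\hookrightarrow \C$, producing an uncountable family of candidates $\phi_*[c]$ that by (1) must collapse to a countable set; a specialization argument should then place $[c]$ in the image from $\overline \Q$.

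With the three statements identified, I would attack (3) directly, as the most geometric: one wants every class in $H_3(\PSL(2,\C)^\delta;\Z)$ to be realized by data defined over a number field. The natural tool is the Bloch--Wigner exact sequence together with the well-understood surjection of $H_3(\PSL(2,\C)^\delta;\Z)$ modulo torsion onto the Bloch group $\B(\C)$. The plan is to reduce the problem to showing that $\B(\C)$, or better $K_3^{\text{\rm ind}}(\C)$, is the union of the images of $\B(K)$ as $K$ ranges over number subfields of $\C$, and then to lift the identification from Bloch groups back to $H_3$ using the torsion description in Theorem \ref{various results}.

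The hardest step will be the Bloch-group reduction itself. Suslin's rigidity theorem gives $K$-theoretic rigidity for algebraically closed fields with \emph{finite} coefficients, $K_*(\overline \Q;\Z/n)\cong K_*(\C;\Z/n)$, but the integral statement involves a uniquely divisible part on each side---the infinite-dimensional $\Q$-vector space of Theorem \ref{various results}---which mod-$n$ rigidity does not constrain. Without a new rigidity theorem in this uniquely divisible setting there is no mechanism for showing that a ``transcendental'' symbol in $\B(\C)$ can be represented by algebraic data, and I expect that this is why the conjecture has resisted attack for three decades: every natural attempt bumps into the need for an integral rigidity result for the $K$-theory of $\C$ that at present does not exist.
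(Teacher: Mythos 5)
The statement you were asked about is the Rigidity Conjecture, and the paper does not prove it: it is recorded as an open problem, roughly thirty years old, with a pointer to \cite{neumann-hilbert} for discussion. Your proposal is likewise not a proof, and you say so yourself: the final paragraph concedes that the decisive step---an integral rigidity statement controlling the uniquely divisible part of $K_3^{\text{\rm ind}}(\C)$, equivalently the infinite-dimensional $\Q$-vector-space summand of $H_3(\PSL(2,\C)^\delta;\Z)$ in Theorem \ref{various results}---is exactly what is not available. That missing step \emph{is} the conjecture; reducing (3) to the analogous statement for $\B(\C)$ or $K_3^{\text{\rm ind}}(\C)$ via the Bloch--Wigner sequence repackages the problem without advancing it, and, as you correctly note, Suslin rigidity with finite coefficients says nothing about the divisible part. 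So what you have is a reasonable framing of the difficulty, not an argument.

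On the equivalences, which is the only part where a complete argument could be expected: (3) $\Rightarrow$ (2) $\Rightarrow$ (1) and (2) $\Rightarrow$ (3) are fine as you sketch them (a cycle in the bar complex has finite support, so it lives in $\PSL(2,K)$ for a finitely generated, hence number, subfield of $\overline{\Q}$; and $\PSL(2,\overline{\Q})$ is countable). But your (1) $\Rightarrow$ (2) argument has a genuine gap. From countability of $H_3(\PSL(2,\C)^\delta;\Z)$ you may conclude that the family $\phi_*[c]$, indexed by the uncountably many embeddings $F\hookrightarrow\C$ moved around by $\operatorname{Aut}(\C/\overline{\Q})$, takes only countably many values; but the collapsing of that family does not by itself exhibit a representative of $[c]$ supported in $\PSL(2,\overline{\Q})$. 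Two distinct embeddings giving equal classes tells you nothing about descent: you would need an actual specialization mechanism (for instance, compatibility of classes defined over $\overline{\Q}(t)$ under specializing $t$ to algebraic versus transcendental values), and constructing such a mechanism for the divisible part of $H_3$ is again the rigidity problem you already identified as open. So the ``(1) $\Rightarrow$ (2)'' step should be flagged as incomplete rather than routine, and the proposal as a whole should be presented as a program, not a proof.
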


\subsection{Invariants of the PSL-fundamental class} There is a
homomorphism $$\hat c\colon H_3(\PSL(2,\C);\Z)\to\C/\pi^2\Z$$ called
the ``Cheeger-Simons class'' (\cite{cheeger-simons}) whose real and
imaginary parts give Chern-Simons invariant and volume: $$\hat
c([M]_{PSL})= cs(M)+i\vol(M)\,.$$ The Chern-Simons invariant
here is the Chern-Simons invariant of the flat connection, which is
defined for any complete hyperbolic manifold $M$ of finite volume. If
$M$ is closed the Riemannian Chern-Simons invariant
$\operatorname{CS}(M)\in \R/2\pi^2$ is also defined; it reduces to
$\cs(M)$ mod $\pi^2$. See \cite{extbloch} for details.

We denote the
homomorphisms given in the obvious way by the real and imaginary parts
of $\hat c$ by:
$$\cs\colon H_3(\PSL(2,\C);\Z)\to \R/\pi^2\Z\,,\qquad
\vol\colon H_3(\PSL(2,\C);\Z)\to \R\,.$$ The homomorphism $\cs$ is
injective on the torsion subgroup of $H_3(\PSL(2,\C);\Z)$. A standard
conjecture that appears in many guises in the literature (see
\cite{neumann-hilbert} for a discussion) is:
\begin{conjecture}
\label{rama} 
  The Cheeger-Simons class is injective.  That is, volume and
  Chern-Simons invariant determine elements of $H_3(\PSL(2,\C);\Z)$
  completely. 
\end{conjecture}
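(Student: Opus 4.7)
The plan is to reduce from $\C$-coefficients to a number field via the Rigidity Conjecture, then to pin down the image structure using Borel regulators, and finally to extract injectivity from a transcendence input. I will be frank that the third step is where essentially all the difficulty lives.

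First I would assume the Rigidity Conjecture in the form of statement (3), so that every class $\alpha\in H_3(\PSL(2,\C)^\delta;\Z)$ is a sum of images of classes in $H_3(\PSL(2,k);\Z)$ for concrete number fields $k\subset\C$, and (after enlarging $k$) it suffices to check injectivity of the composition $H_3(\PSL(2,k);\Z)\to H_3(\PSL(2,\C)^\delta;\Z)\to\C/\pi^2\Z$ for a single such $k$ carrying the cycle. Representing classes explicitly in the extended Bloch group $\widehat{\B}(k)$, as in \cite{extbloch}, converts the homological problem into a concrete combinatorial statement about sums of extended cross-ratios drawn from $k$.

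Next, by Theorem \ref{various results}, $H_3(\PSL(2,k);\Z)$ modulo torsion is $\Z^{r_2}$, and its full invariant is captured by the $r_2$-tuple of complex Borel regulators associated to the embeddings $\sigma_1,\dots,\sigma_{r_2}\colon k\to\C$; meanwhile $\hat c$ retains only the component coming from the distinguished embedding $k\hookrightarrow\C$. Injectivity of $\hat c$ on $H_3(\PSL(2,k);\Z)$ therefore breaks into two subclaims: (a) the torsion subgroup maps injectively, which I expect to follow from the stated injectivity of $\cs$ on the torsion of $H_3(\PSL(2,\C);\Z)$ together with the assertion in Theorem \ref{various results} that $H_3(\PSL(2,k);\Z)\to H_3(\PSL(2,\C);\Z)$ has torsion kernel; and (b) the free quotient $\Z^{r_2}$ injects into $\C/\pi^2\Z$ through just the distinguished regulator component.

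Subclaim (b) is the main obstacle and will not yield to formal manipulation. It is equivalent to a strong $\Q$-linear independence statement for values of the extended Bloch-Wigner dilogarithm at algebraic arguments: no nonzero $\Z$-combination of classes in the free part of $H_3(\PSL(2,k);\Z)$, however it distributes its ``energy'' among the $r_2$ Galois-conjugate regulators, can have trivial image at the distinguished place modulo $\pi^2\Z$. This is strictly stronger than Borel's nonvanishing theorem, and lies squarely inside the circle of Zagier-type transcendence conjectures on algebraic independence of dilogarithm values. My tentative line of attack would be to exploit Galois equivariance of $\widehat{\B}$ to propagate information between places and then to invoke partial transcendence results for logarithms and dilogarithms, but I do not expect success without a genuine advance on the transcendence side; this, in my view, is why the conjecture has remained open.
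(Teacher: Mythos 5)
The statement you are addressing is Conjecture~\ref{rama}; the paper does not prove it and does not claim to --- it is presented as a standard open conjecture (with discussion deferred to \cite{neumann-hilbert}), so there is no ``paper proof'' for your argument to match. Judged on its own terms, your proposal is not a proof but a conditional reduction, and you are candid about this. Two of its inputs are themselves open: the first step assumes the Rigidity Conjecture (stated as a conjecture in the same paper), and your subclaim (b) --- that the single distinguished-place component of $\hat c$, i.e.\ the pair $(\cs,\vol)$ at the concrete embedding, is injective on the free part $\Z^{r_2}$ of $H_3(\PSL(2,k);\Z)$ --- is precisely where the entire content of Conjecture~\ref{rama} sits. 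Borel's theorem (Theorem~\ref{borel theorem}) only gives injectivity of the full $r_2$-tuple of volume regulators; collapsing to one place and working modulo $\pi^2\Z$ is a transcendence/irrationality statement about dilogarithm values at algebraic arguments for which no technique is available, as you acknowledge. So the net effect of your argument is to trade one open conjecture for two (Rigidity plus a Zagier-type independence statement), which does not establish the statement.

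The formal scaffolding around the gap is mostly sound: granting Rigidity in form (3), any potential kernel element lies in the image of $H_3(\PSL(2,K);\Z)$ for a single concrete number field $K$ (pass to a compositum), and your subclaim (a) on torsion does follow from the injectivity of $\cs$ on the torsion of $H_3(\PSL(2,\C);\Z)$ together with the torsion-kernel statement in Theorem~\ref{various results}. But be aware that the reduction buys less than it might appear to: even for a fixed number field $k$ with $r_2>1$, the volume component at the distinguished place can vanish on classes with nonzero Borel regulator at other places, so injectivity must genuinely use the Chern--Simons part as well, and no Galois-equivariance argument is known that transfers regulator information between places. In short: there is a genuine gap at subclaim (b), it is the same gap that keeps Conjecture~\ref{rama} open, and your write-up should present the result as conditional rather than as a proof.
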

If $k$ is an algebraic number field and
$\sigma_1,\dots,\sigma_{r_2}\colon k\to\C$ are its different complex
embeddings up to conjugation then denote by $\vol_j$ the composition
$$\vol_j=\vol\circ(\sigma_j)_*\colon H_3(\PSL(2,k);\Z)\to\R.$$ The map
$$\Borel:=(\vol_1,\dots,\vol_{r_2})\colon
H_3(\PSL(2,k);\Z)\to\R^{r_2}$$ is called the \emph{Borel regulator}.
\begin{theorem}\label{borel theorem} The Borel regulator maps
  $H_3(\PSL(2,k);\Z)/\text{\rm Torsion}$ injectively onto a full
  sublattice of $\R^{r_2}$. 
\end{theorem}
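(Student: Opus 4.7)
The plan is to reduce the statement to Borel's theorem on regulators of arithmetic groups, using Theorem~\ref{various results} to dispose of the rank question and using arithmetic hyperbolic $3$--manifolds as a source of test classes. By Theorem~\ref{various results}, $H_3(\PSL(2,k);\Z)/\text{Torsion}$ is free abelian of rank $r_2$, and the target $\R^{r_2}$ has real dimension $r_2$. Injectivity together with the image being a full sublattice is therefore equivalent to the single statement that the $r_2\times r_2$ matrix representing $\Borel$ in any choice of basis is nonsingular. Discreteness of the image is automatic: the source is finitely generated, and each coordinate takes values in $\R$, so any rank-$r_2$ image is automatically a full sublattice. So what remains is to exhibit $r_2$ classes whose regulator vectors are $\R$-linearly independent.

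Next, I would produce test classes using Subsection~\ref{subsec:arith}. For each complex place $\sigma_j\colon k\to\C$ ($j=1,\dots,r_2$), choose a quaternion algebra $E^{(j)}$ over $k$ ramified at all real places and at every complex place except $\sigma_j$ (adjusting a single real place to satisfy the parity condition in the Classification theorem if necessary). The construction of Subsection~\ref{subsec:arith}, applied to the concrete embedding of $k$ via $\sigma_j$, yields a cocompact arithmetic Kleinian group $\Gamma^{(j)}\subset\PSL(2,\C)$ with invariant trace field $k$, and hence a PSL-fundamental class $[M^{(j)}]_{\PSL}\in H_3(\PSL(2,k);\Z)$. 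The $j$-th coordinate of $\Borel([M^{(j)}]_{\PSL})$ is the honest hyperbolic volume $\vol(M^{(j)})>0$, while the $i$-th coordinate for $i\ne j$ is the volume of the arithmetic (or degenerate) object built from the Galois conjugate algebra $\sigma_i(E^{(j)})$, which in general need not vanish.

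The main obstacle is arranging linear independence of the $r_2$ vectors $\Borel([M^{(j)}]_{\PSL})$; it is not enough that each has a single large diagonal coordinate. This is exactly the content of Borel's theorem from \cite{borel}. His proof proceeds via the Van Est isomorphism, identifying $H^3_{\mathrm{cts}}(\PSL(2,\R)^{r_1}\times\PSL(2,\C)^{r_2};\R)$ with the relative Lie algebra cohomology of the product symmetric space, which turns out to be $r_2$-dimensional and spanned by the hyperbolic volume forms on the $r_2$ factors $\H^3$; restriction to any arithmetic lattice is injective, so these $r_2$ continuous cohomology classes pair nondegenerately with classes of the above form. Dually, this says exactly that the regulator matrix built from the $[M^{(j)}]_{\PSL}$ is nonsingular, and hence $\Borel$ has image of full rank $r_2$.

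Combining the three steps: the image of $\Borel$ spans $\R^{r_2}$, so together with the rank-$r_2$ source it must be injective onto a full lattice. The hard input is the nondegeneracy in Borel's theorem; everything else is bookkeeping around the identification of volumes at different complex places with coordinates of the regulator.
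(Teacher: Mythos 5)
Your reduction in the first paragraph is fine (modulo the aside that a rank-$r_2$ subgroup of $\R^{r_2}$ is ``automatically'' a full sublattice, which is false as stated --- e.g.\ $(a,b)\mapsto(a+b\sqrt2,0)$ --- though your real criterion, nonsingularity of the regulator matrix, is the correct one). The genuine gap is in your source of test classes. A quaternion algebra is \emph{never} ramified at a complex place: over $\C$ the only quaternion algebra is $M_2(\C)$, as recalled in the paper's classification discussion. So there is no algebra $E^{(j)}$ over $k$ ``ramified at every complex place except $\sigma_j$.'' The construction of Subsection \ref{subsec:arith} applied to any quaternion algebra over $k$ therefore always produces a lattice in the full product $\prod_{i=1}^{r_2}\PSL(2,\C)\times\prod\PSL(2,\R)$, and it yields a Kleinian group only when $r_2=1$ (and $r_1^u=0$); indeed the paper notes that arithmeticity of a Kleinian group forces the invariant trace field to have exactly one complex place. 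Hence for $r_2\ge 2$ your classes $[M^{(j)}]_{\PSL}$ do not exist, and whether classes of hyperbolic $3$--manifolds with invariant trace field in $k$ even generate $\B(k)\otimes\Q$ is precisely the open Question raised in this paper --- it cannot be used as a routine step. The lattice $\O^1/\{\pm1\}$ itself lives on a $(3r_2+2r_1^u)$--dimensional locally symmetric space, so its fundamental class does not directly give an element of $H_3(\PSL(2,k);\Z)$ either; your appeal to Van Est and injectivity of restriction to the arithmetic lattice computes (continuous) cohomology of that product group, not the regulator on $H_3$ of the discrete group $\PSL(2,k)$.

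Once the faulty test-class step is removed, what remains of your argument is simply an appeal to Borel's regulator theorem, and that is in fact how the statement should be (and in the paper is) handled: Theorem \ref{borel theorem} is quoted as a known result, resting on Borel's computation of the stable real cohomology of $\SL_N(\O_k)$ and the rank and nondegeneracy of the regulator on $K_3^{\text{\rm ind}}(k)\cong H_3(\SL(2,k);\Z)$ up to torsion (combined with the rank statement already recorded in Theorem \ref{various results}), not on exhibiting hyperbolic $3$--manifold classes. Note also that \cite{borel} as cited in this paper is the article on commensurability classes and volumes of hyperbolic $3$--manifolds, not Borel's stable cohomology/regulator papers, so the reference you lean on does not contain the theorem in the form you need.
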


By Theorem \ref{various results} and the discussion above,
$\cs(M)\in\R/\Z$ and $\Borel([M]_{PSL}) \in \R^{r_2(k)}$ determine the
PSL-fundamental class $[M]_{PSL}\in H_3(\PSL(2,\C);\Z)$ completely.

These invariants are computed by the program \snap\ (see
\cite{cghn}). Snap does this via a more easily computed invariant
which we describe next.

\subsection{Bloch group and Bloch invariant} 
The \emph{Bloch group $\Bloch(\C)$} is the quotient of
$H_3(\PSL(2,\C)^\delta;\Z)$ by its torsion subgroup. It has the
advantage that it has a simple symbolic description and the image of
$[M]_{\PSL}$ in $\Bloch(\C)$ is readily computed from an ideal
triangulation.

The Bloch group is defined for any field. There are different
definitions of it in the literature; they differ at most
by torsion and agree with each other for algebraically closed fields
(see, e.g., \cite{dupont-sah}). We use the following.

\begin{definition}\label{def-bloch} Let $K$ be a field. The {\em
pre-Bloch group $\Prebloch(K)$} is the quotient of the free
$\Z$-module $\Z (K-\{0,1\})$ by all instances of the following
relation:
$$ [x]-[y]+[\frac
yx]-[\frac{1-x^{-1}}{1-y^{-1}}]+[\frac{1-x}{1-y}]=0, $$
called the \emph{five term relation}. The \emph{Bloch group
  $\Bloch(K)$} is the kernel of the map
\begin{equation}
  \label{eq:lambda}
  \Prebloch(k)\to
K^*\wedge_\Z K^*,\quad [z]\mapsto 2(z\wedge(1-z))\,. 
\end{equation}
\end{definition}



Suppose we have an ideal triangulation of a hyperbolic $3$-manifold
$M$ using ideal hyperbolic simplices with cross ratio parameters
$z_1,\dots,z_n$. This ideal triangulation can be a genuine ideal
triangulation of a cusped $3$-manifold, or a deformation of such a one
as used by \snap\ and \snappea\ to study Dehn filled manifolds, but it
may be more generally any ``degree one
triangulation''; see \cite{neumann-yang3}.

\begin{definition} The {\em Bloch invariant $\beta(M)$} is the element
  $\sum_1^n \pm[z_j]\in\Prebloch(\C)$ with signs as explained
  below. It lies in $\Bloch(\C)$ by \cite{neumann-yang3}.
\end{definition}

The cross-ratio parameter of an ideal simplex depends on a chosen
ordering of the vertices, and the sign in the above sum reflects
whether or not this ordering orients the
simplex as it is oriented as part of the degree one
triangulation. 

If the $z_j$'s all belong to a subfield $K\subset \C$, we may consider
$\beta(M)$ as an element of $\Bloch(K)$. But it is then necessary to
assume that the vertex orderings of the simplices match on common faces.
If not, then
$\sum_1^n \pm[z_j]$ may differ from $\beta(M)$ by a torsion element (of
order dividing $12$; this torsion issue does not arise in
$\Bloch(\C)$, which is torsion-free).  Not every triangulation has
compatible vertex-orderings for the simplices, although a
triangulation can always be refined to one which does.
\begin{theorem}\label{th:in k} If $M$ has cusps then 
  $\beta(M)$ is actually defined in $\Bloch(k)$, for the invariant
  trace field $k$, while if $M$ is closed this holds for $2\beta(M)$.
\end{theorem}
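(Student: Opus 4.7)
My approach is to produce an ideal triangulation whose cross-ratio parameters $z_j$ lie in $k$ (cusped case) or in a quadratic extension $L$ of $k$ (closed case), and then to promote the sum $\sum\pm[z_j]$, already known to lie in $\Bloch(\C)$ by \cite{neumann-yang3}, to an element of $\Bloch(k)$. In the closed case the promotion can be arranged only after doubling, which accounts for the factor $2$ in the statement.

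For the cusped case I would invoke Reid's theorem that $\tr(\Gamma)=k(\Gamma)=k$ when $\Gamma$ has parabolic elements, so $\Gamma$ can be conjugated into $\PSL(2,k)$. Every ideal triangulation of $M$ places its simplex vertices at cusp points, i.e.\ at parabolic fixed points of $\Gamma$, and once $\Gamma\subset\PSL(2,k)$ such fixed points lie in $\mathbb P^1(k)$. Hence each cross ratio $z_j\in k$, giving $\sum\pm[z_j]\in\Prebloch(k)$. To conclude $\beta(M)\in\Bloch(k)$ I must check that $2\sum\pm z_j\wedge(1-z_j)=0$ in $k^*\wedge_{\Z}k^*$. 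This is the same edge-equation bookkeeping used in \cite{neumann-yang3} to put $\beta(M)$ in $\Bloch(\C)$: the product of cross-ratio parameters around each edge of the triangulation equals $1$, which unpacks into the needed wedge identity. The key point is that every wedge in that computation is already a wedge of elements of $k^*$, so the vanishing occurs at the $k$-level automatically.

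For the closed case, the trace field $\tr(\Gamma)$ may strictly contain $k$, but $\Gamma^{(2)}$ can always be conjugated into $\PSL(2,k)$. Choose simplex vertices as fixed points of loxodromic elements of $\Gamma^{(2)}$; since these fixed points satisfy a quadratic over $k$ with discriminant $\tr(\gamma)^2-4\in k$, they lie in $\mathbb P^1(L)$ for a single quadratic extension $L/k$, and propagating by $\Gamma^{(2)}\subset\PSL(2,k)\subset\PSL(2,L)$ keeps all simplex vertices in $\mathbb P^1(L)$. By the cusped argument, $\beta(M)\in\Bloch(L)$. Now I would apply Galois descent: the transfer $N_{L/k}\colon\Bloch(L)\to\Bloch(k)$ followed by the inclusion $\Bloch(k)\to\Bloch(L)$ is multiplication by $[L:k]=2$ on the $\Gal(L/k)$-invariant subgroup, and $\beta(M)$ is Galois-invariant because the manifold $M$ itself does not depend on the choice of conjugation of $\Gamma$ inside $\PSL(2,\C)$. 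Hence $2\beta(M)\in\Bloch(k)$.

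The main obstacle is establishing Galois invariance of $\beta(M)$ in $\Bloch(L)$—not merely in $\Bloch(\C)$—in the closed case. Two developing maps related by the nontrivial element of $\Gal(L/k)$ obviously yield Bloch invariants agreeing in $\Bloch(\C)$, but to feed the transfer argument one needs agreement already in $\Bloch(L)$. This reduces to showing that the kernel of $\Bloch(L)\to\Bloch(\C)$ is torsion (a Suslin-type fact consistent with Theorem~\ref{borel theorem}), after which any torsion discrepancy can be absorbed and the conclusion $2\beta(M)\in\Bloch(k)$ still holds on the nose. The parallel edge-equation bookkeeping in $k^*\wedge_{\Z}k^*$ for the cusped case is technical but routine by comparison.
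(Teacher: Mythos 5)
Your cusped-case argument is essentially the known one (Reid's theorem that the trace field equals $k$ for cusped $M$, conjugation of $\Gamma$ into $\PSL(2,k)$ so that all ideal vertices and hence all cross-ratios lie in $k$, with the edge relations giving the wedge condition over $k$), and that part is fine modulo the vertex-ordering subtlety the paper mentions. The closed case, however, has a genuine gap at its first step: it is simply false in general that $\Gamma^{(2)}$ can be conjugated into $\PSL(2,k)$. The obstruction is exactly the invariant quaternion algebra $A(M)$: $\Gamma^{(2)}$ lies in $A(M)^*/k^*$, and it embeds in $\PSL(2,k)$ only when $A(M)\cong M_2(k)$, i.e.\ when $A(M)$ is unramified. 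For cusped manifolds this always holds (which is why that case is easy), but the paper points out that for closed manifolds unramified invariant quaternion algebras are uncommon. Relatedly, your claim that the fixed points of loxodromic elements all lie in $\mathbb P^1(L)$ for a \emph{single} quadratic extension $L/k$ is also false: different elements have discriminants $\tr(\gamma)^2-4$ generating different quadratic extensions, and what one actually controls is the trace field $K$, which is a multi-quadratic extension of $k$ of degree $2^r$ (\cite[Theorem 2.2]{nr}), together with a further quadratic extension $K'/K$ needed to split the quaternion algebra. So there is no triangulation with parameters in a quadratic extension of $k$ to start your descent from.

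Even granting a triangulation with parameters in some number field, your transfer/Galois-descent step is weaker than what the theorem asserts. Galois invariance of $\beta(M)$ can only be checked a priori in $\Bloch(\C)$, and the kernel of $\Bloch(L)\to\Bloch(\C)$ being torsion then yields invariance, and hence descent, only \emph{up to torsion}; absorbing that torsion costs further multiples, which is precisely why the result was previously known only for $2^c\beta(M)$ with some unspecified power of $2$. The content of the theorem is the precise factor $2$, and the paper obtains it by a different mechanism: lift to $\SL(2,\C)$ (a choice of spin structure), pass to $K'$ splitting the quaternion algebra to get a class $[M]_K\in K_3^{\rm ind}(K')$, descend to $K$ using the fact that Borel plus Chern--Simons invariants determine elements of $K_3^{\rm ind}(K')$ and that $K_3^{\rm ind}$ satisfies Galois descent (Merkuriev--Suslin), and then control the remaining ambiguity from the $2^r$ choices of lift by the result of \cite{pitsch} that changing the lift changes $[M]_K$ by at most the unique element of order $2$; this is where the factor $2$ comes from, not from a norm over a quadratic extension. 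Without inputs of this kind your outline cannot recover the stated statement.
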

This was known in the cusped case (the simplex parameters of an ideal
triangulation then lie in $k$, see \cite{nr}) but it was only known up
to a higher power of $2$ in the closed case (\cite{neumann-hilbert,
  neumann-yang3}). The proof, joint with Zickert (but mostly Zickert),
is at the end of this subsection.

Since $\beta(M)$ only loses torsion information over $[M]_{\PSL}$, the
Borel regulator $\Borel(M)$ can be computed from $\beta(M)$. It is
computed from the simplex parameters $z_i$ as follows. The $z_i$
generate a field $K$ which contains the invariant trace field $k$ of
$M$. The $j$-th component $\vol_j([M]_{PSL})$ of $\Borel(M)$
is $$\Borel(M)_j=\sum_{i=1}^n \pm D_2(\tau_j(z_i)),$$ where
$\tau_j\colon K\to\C$ is any complex embedding which extends
$\sigma_j\colon k\to \C$. Here the signs are as above, and $D_2$ is
the ``Wigner dilogarithm function'' $$D_2(z) = \operatorname{Im}
\ln_2(z) + \log |z|\arg(1-z),\quad z\in \C -\{0,1\},$$ where
$\ln_2(z)$ is the classical dilogarithm function. $D_2(z)$ can also be
defined as the volume of the ideal simplex with parameter $z$.

Recall that $k$ is a concrete number field, i.e., it comes as a
subfield of $\C$. The component of $\Borel(M)$ corresponding to this
embedding in $\C$ is $\pm\vol(M)$, and it has maximal absolute value
among the components of $\Borel(M)$ (see \cite{neumann-yang3}).  This
restricts which elements of $\Bloch(k)$ can be the Bloch invariant of
a hyperbolic 3-manifold. A related (and conjecturally equivalent)
restriction is in terms of the Gromov norm, which is defined on
$\Bloch(k)$ (see \cite{neumann-yang3}); the Bloch invariants of
hyperbolic manifolds are constrained to lie in the cone over a single
face of the norm ball.

Nevertheless, it is plausible that the Bloch group can be generated by
Bloch invariants of $3$-manifolds. No obstructions to this are known,
and there is (very mild) experimental evidence for it for low degree
fields which appear as invariant trace fields of manifolds in the
cusped and Snappea closed censuses \cite{census, snappea}; some
computations related to this are in \cite{cghn}.  So we ask:
\begin{question}
  Is $\Bloch(k)$ generated by Bloch invariants of hyperbolic manifolds
  with invariant trace field in $k$; how about $\Bloch(k)\otimes\Q$
  over $\Q$? 
\end{question}
\begin{proof}[Proof of Theorem \ref{th:in k}] (See also \cite{zickert3}.)
  Since the theorem is known in the cusped case we assume $M$ is
  closed. We will need Suslin's version of the Bloch group
  \cite{suslin}, defined by omitting the factor 2 in the map 
  \eqref{eq:lambda} in the definition above. We will denote it
  $\Bloch_S(K)$. Clearly, $\Bloch_S(K)\subset \Bloch(K)$, and the
  quotient $\Bloch(K)/\Bloch_S(K)$ is of exponent 2 (one can show it
  is infinitely generated if $K$ is a number field). We will actually
  show that $\beta(M)\in \Bloch_S(k)$.

  We will use Suslin's theorem that $\Bloch_S(K)$ is a quotient of
  $K_3^{\text{\rm ind}}(K)$ by a finite cyclic group (\cite{suslin},
  see also \cite{zickert3}).

  The geometric $\PSL$--representation of $\pi_1(M)$ lifts to a
  representation $\pi_1(M)\to \SL(2,\C)$. The set of such lifts is in
  one-one correspondence with spin structures on $M$; we just pick one
  for now. The image $\Gamma$ of such a lifted representation lies in
  the quaternion algebra $\Q \Gamma$, which can be unramified by
  extending its scalars to a quadratic extension field $K'$ of the
  trace field $K$ (such a $K'$ can be taken as $K(\lambda)$ for any
  eigenvalue $\lambda$ of a nontrivial element of $\Q\Gamma$; see
  e.g., \cite{mr}). We get $\pi_1(M)\hookrightarrow \GL(2,K')$,
  leading to a ``GL--fundamental class'' $[M]_{\GL}\in
  H_3(GL(2,K');\Z)$. There is a natural map $H_3(\GL(2,K');\Z)\to
  K_3^{\text{\rm ind}}(K')$ (see, e.g., \cite{zickert3}), and we
  denote the image of $[M]_{\GL}$ by $[M]_K\in K_3^{\text{\rm
      ind}}(K')$. Now the non-trivial element of $\Gal(K'/K)$
  preserves traces of $\pi_1(M)\to \GL(2,\C)$, so it takes this
  representation to a representation which is equivalent over $\C$. It
  therefore fixes the Borel invariant and Chern-Simons invariant of
  $[M]_K$. The Chern-Simons invariant on $K_3^{\text{\rm ind}}$ takes
  values in $\C/4\pi^2\Z$, and this Chern-Simons invariant and Borel
  invariant together determine any element of $K_3^{\text{\rm
      ind}}(K')$ (see \cite{zickert3}). Thus the class $[M]_K$ is
  invariant under $\Gal(K'/K)$, and since $K_3^{\text{\rm ind}}$
  satisfies Galois descent (Merkuriev and Suslin \cite{m-suslin}),
  this class lies in $K_3^{\text{\rm ind}}(K)$.  We note, however,
  that a priori $[M]_K$ may depend on which lift
  $\pi_1(M)\hookrightarrow \SL(2,\C)$ we started with.  

  By \cite[Theorem 2.2]{nr} the trace field $K$ is a multi-quadratic
  extension of the invariant trace field $k$, with Galois group
  $\Gal(K/k)\cong (\Z/2)^r$ for some $r$. This group permutes the
  lifts $\pi_1(M)\hookrightarrow \SL(2,\C)$ and hence acts on the
  elements $[M]_K\in K_3^{\text{\rm ind}}(K)$ defined by these
  lifts. In  \cite{pitsch} it is shown that $[M]_K$ is changed by at
  most the unique element of order 2 (and such a change can
  occur). Thus $2[M]_K$ is invariant under this Galois group, and is hence an
  invariant of $M$ in $K_3^{\text{\rm ind}}(k)$ which is independent
  of the lift to $SL(2,\C)$.

  The Bloch invariant $2\beta(M)$ is the image of
  $2[M]_K$ under a natural transformation from $K_3^{\text{\rm ind}}$
  to $\Bloch_S$, so the theorem is proved.
 \end{proof}
\begin{problem}
  Find a general explicit way to obtain a representative for 
  $\beta(M)$ in $\Bloch_S(k)$ if $M$ is closed.
\end{problem}
\begin{remark}
  In \cite{zickert3} Zickert points out that $\beta(M)\in \Bloch(k)$
  may not be in its subgroup $\Bloch_S(k)$ if $M$ has cusps, in
  contrast to the closed case. An example is the manifold $M009$ in
  the census \cite{census}.
\end{remark}

\subsection{Extended Bloch group}
The \emph{extended Bloch group} $\widehat\Bloch(\C)$ of \cite{extbloch} is
defined by replacing $\C-\{0,1\}$ by a $\Z\times\Z$--cover in the
definition of Bloch group, and appropriately lifting the 5-term
relation and the map $\lambda$.  There are two different versions of
this defined in \cite{extbloch}; we will write $\widehat\Bloch_{\PSL}(\C)$ for
the first and $\widehat\Bloch(\C)$ for the second (they were denoted
$\widehat\Bloch(\C)$ and $\mathcal E\Bloch(\C)$ respectively in
\cite{extbloch}).
\begin{theorem}
  There are natural isomorphisms
  $H_3(\PSL(2,\C)^\delta;\Z)\cong\widehat\Bloch_{\PSL}(\C)$ and
  $H_3(\SL(2,\C)^\delta;\Z) \cong \widehat\Bloch(\C)\cong K_3^{\text{\rm ind}}(\C)$.  (See
  \cite{extbloch} and \cite{goette-zickert} respectively.)
\end{theorem}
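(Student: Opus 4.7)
The plan is to build a simplicial model for $H_3$ of the discrete groups from the action on $\partial\H^3\cong\C P^1$ and match it term by term with the symbolic description of the extended Bloch group. The classical Dupont--Sah result identifies $H_3(\PSL(2,\C)^\delta;\Z)$ modulo torsion with $\Bloch(\C)$ via cross-ratios of ordered $4$-tuples and the $5$-term relation on $5$-tuples; the point of passing to the $\Z\times\Z$-cover in the definition of $\widehat\Bloch(\C)$ is to refine this to recover torsion as well.

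Concretely I would form the chain complex whose $n$-chains are the free $\Z[\PSL(2,\C)]$-module on ordered $(n+1)$-tuples of distinct points of $\C P^1$, with the alternating face boundary, and take coinvariants. A Dupont-style acyclicity argument shows this computes $H_*(\PSL(2,\C)^\delta;\Z)$ in a range including degree $3$. In degree $3$ the coinvariants become $\Z[\C-\{0,1\}]$ via the cross-ratio, and in degree $4$ the boundary imposes the $5$-term relation. To pick up torsion I would replace the cross-ratio $z$ by a \emph{flattening}: integers $p,q$ recording the branches $w_0=\log z+p\pi i$ and $w_1=-\log(1-z)+q\pi i$, with $w_2:=-w_0-w_1$. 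This is exactly a lift of $z$ to the $\Z\times\Z$-cover, and an appropriate parity condition on $(p,q)$ separates the $\PSL$ and $\SL$ versions. The $5$-term relation lifts uniquely to the cover by combinatorial bookkeeping of the flattenings across the five tetrahedra of a degenerate $4$-simplex, yielding $\widehat\Bloch_{\PSL}(\C)$ and $\widehat\Bloch(\C)$ on the nose.

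To produce the isomorphism I would send a lifted cross-ratio to the bar cycle built from the corresponding configuration of vertices in $\C P^1$, with the flattening data supplying the auxiliary bounding chain needed for the construction to descend across the lifted $5$-term relation. The inverse comes from triangulating a given $3$-cycle into ideal tetrahedra and assigning them compatible flattenings, using spin structures in place of horoball choices in the $\SL$ version. For the final identification with $K_3^{\text{\rm ind}}(\C)$ I would compose with Suslin's theorem that the Hurewicz map $H_3(\SL(2,\C)^\delta;\Z)\to K_3^{\text{\rm ind}}(\C)$ is an isomorphism, which rests on homological stability for $\SL(n,\C)$ together with the Borel-regulator computation.

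The main obstacle is matching torsion exactly. By Theorem~\ref{various results} the torsion subgroup of $H_3(\PSL(2,\C)^\delta;\Z)$ is $\Q/\Z$, and one needs enough symbolic classes in $\widehat\Bloch_{\PSL}(\C)$ to realize all of it without collapse. The cleanest detection tool is the Cheeger--Simons class $\hat c$ with values in $\C/\pi^2\Z$: I would define $\hat c$ directly on flattened symbols by a dilogarithm formula in $w_0$ and $w_1$, verify by hand that it is a homomorphism killed by the lifted $5$-term relation, and check that it is injective on torsion. Once this is established the geometric and symbolic $\Q/\Z$-subgroups are identified and the isomorphisms of the theorem follow, with the remainder of the argument being careful but routine simplicial bookkeeping.
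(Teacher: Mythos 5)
Your outline reproduces the general flavor of the cited proofs (flattenings as lifts to the $\Z\times\Z$--cover, the lifted five-term relation, dilogarithm detection of torsion, Suslin for $K_3^{\rm ind}$), but it has a gap at exactly the point where the theorem has content. The claim that the complex of ordered tuples of \emph{distinct} points of $\C P^1$, after taking coinvariants, computes $H_*(\PSL(2,\C)^\delta;\Z)$ through degree $3$ is false: in the hyperhomology spectral sequence for this (non-free) resolution the stabilizers of configurations of size $\le 2$ (the Borel subgroup and $\C^*$) contribute nontrivially, and the Dupont--Sah analysis \cite{dupont-sah} yields only the Bloch--Wigner sequence $0\to\Q/\Z\to H_3(\PSL(2,\C)^\delta;\Z)\to\Bloch(\C)\to 0$. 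The $\Q/\Z$ you are trying to capture is precisely what is lost in passing to cross-ratio coinvariants, so decorating the generators of that same complex with flattenings cannot recover it: a flattening is extra data not induced by the configuration, and grafting it onto a complex whose $H_3$ is already only $\Bloch(\C)$ does not change the homological output. In \cite{extbloch} the isomorphism $H_3(\PSL(2,\C)^\delta;\Z)\cong\widehat\Bloch_{\PSL}(\C)$ is not obtained from a decorated configuration complex by ``routine bookkeeping''; the map is built at the level of explicit cycles, and the substantial work is (i) showing consistent flattenings exist on a given $3$-cycle, (ii) showing independence of all choices of orderings and logarithm branches, which rests on Neumann's earlier combinatorial results on flattenings and parities of (degree one) ideal triangulations, and (iii) computing the symbolic side itself, i.e.\ that $\widehat\Bloch_{\PSL}(\C)$ is an extension of $\Bloch(\C)$ by $\Q/\Z$ on which the Rogers dilogarithm (equivalently $\hat c$) is injective, after which a comparison of the two extensions finishes the argument. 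None of these steps is supplied, or even flagged as nontrivial, in your proposal.

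For the second half, note also that the paper itself gives no proof but cites \cite{extbloch} for the $\PSL$ statement and \cite{goette-zickert} for the $\SL$ statement; the $\SL$ case was only conjectured in \cite{extbloch}. Goette and Zickert do not argue via spin structures replacing horoball choices; they deduce $H_3(\SL(2,\C)^\delta;\Z)\cong\widehat\Bloch(\C)$ by a careful comparison with the already-established $\PSL$ isomorphism through the natural maps $\widehat\Bloch(\C)\to\widehat\Bloch_{\PSL}(\C)$ and $H_3(\SL(2,\C)^\delta;\Z)\to H_3(\PSL(2,\C)^\delta;\Z)$, keeping track of the $2$-torsion discrepancies; your spin-structure heuristic is plausible but is not an argument. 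Invoking Suslin \cite{suslin} for $H_3(\SL(2,\C)^\delta;\Z)\cong K_3^{\rm ind}(\C)$ is correct. Two smaller inaccuracies: the five-term relation does not lift \emph{uniquely} to the cover --- the lifted relation is the family of lifts satisfying the flattening compatibility conditions, and identifying exactly which lifts hold is part of the work; and on the $\SL$/$K_3^{\rm ind}$ side the relevant Cheeger--Simons class takes values in $\C/4\pi^2\Z$, not $\C/\pi^2\Z$. Finally, injectivity of $\cs$ on torsion only shows your symbolic torsion does not collapse; you still must produce enough explicit torsion elements in the extended Bloch group to map \emph{onto} the $\Q/\Z$ of Theorem~\ref{various results}, which requires an explicit construction rather than bookkeeping.
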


The program \snap\ actually computes the element in
$\widehat\Bloch_{\PSL}(\C)$, and then prints the Borel regulator and
Chern-Simons invariant, which, as already mentioned, determine this
element. In \cite{zickert2} Zickert gives a much simpler way of
computing the element of $\widehat\Bloch_{\PSL}(\C)$ than the one
currently used by \snap.  This has now been implemented in SnapPy
\cite{SnapPy} by Matthias Goerner. 

In \cite{zickert3} Zickert extends the definition of extended Bloch
groups to number fields and shows
\begin{theorem}
There is a natural
isomorphism $\widehat\Bloch(K)\cong K_3^{\text{\rm ind}}(K)$ for any number
field $K$.
\end{theorem}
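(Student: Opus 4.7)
The plan is to leverage the already-established isomorphism $\widehat\Bloch(\C)\cong K_3^{\text{\rm ind}}(\C)$ of Goette--Zickert together with Galois descent, very much in the spirit of the proof of Theorem \ref{th:in k} given earlier. First I would construct a natural map $\phi_K\colon \widehat\Bloch(K)\to K_3^{\text{\rm ind}}(K)$. Pick an embedding $\sigma\colon K\hookrightarrow\C$ and send a lifted cross-ratio symbol $[z;p,q]\in\widehat\Bloch(K)$ to the corresponding element of $\widehat\Bloch(\C)\cong K_3^{\text{\rm ind}}(\C)$; one verifies that the lifted five-term relation maps to the lifted five-term relation in $\widehat\Bloch(\C)$, so this is well-defined on $\widehat\Bloch(K)$. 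The point is then to show that the image lies inside the subgroup $K_3^{\text{\rm ind}}(K)\subset K_3^{\text{\rm ind}}(\C)$. For this I would use Merkuriev--Suslin Galois descent for $K_3^{\text{\rm ind}}$: since the symbolic expression defining a class in $\widehat\Bloch(K)$ uses only elements of $K$, the resulting class in $K_3^{\text{\rm ind}}(\C)$ is invariant under $\Gal(\overline K/K)$ (each Galois automorphism just permutes the cross-ratio symbols and preserves the five-term relation), hence descends to $K_3^{\text{\rm ind}}(K)$. Independence of the choice of embedding $\sigma$ then follows from the same Galois-invariance.

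Next I would verify that $\phi_K$ is an isomorphism by comparing the two sides invariant by invariant. On the free part, I would use the Borel regulator $\Borel\colon K_3^{\text{\rm ind}}(K)/\text{Tors}\xrightarrow{\sim}\Z^{r_2}$ (Theorem \ref{borel theorem}) on one side and on the other side the dilogarithm regulator on $\widehat\Bloch(K)$, defined using the Rogers dilogarithm on each of the $r_2$ complex embeddings. Because both regulators are computed from the same cross-ratio data and both land on full sublattices of $\R^{r_2}$, and because $\phi_K$ by construction intertwines them, $\phi_K$ is an isomorphism modulo torsion. The torsion of $K_3^{\text{\rm ind}}(K)$ for a number field is the expected cyclic group related to the roots of unity in the imaginary quadratic closure of $K$; I would match this with the torsion of $\widehat\Bloch(K)$ by identifying both with $H_3$ of the cyclic group generated by a parabolic lift (the same computation, as in Zickert's earlier work for $\C$, produces the Chern--Simons/$\Q/\Z$-valued invariant in both settings).

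The main obstacle, I expect, is showing that the map $\phi_K$ is well-defined as a homomorphism into $K_3^{\text{\rm ind}}(K)$ and not merely into $K_3^{\text{\rm ind}}(\C)$: one has to check carefully that Zickert's lifted five-term relations for $K$ really agree with the ones for $\C$ restricted to $K$, and that the Galois descent step does not lose (or introduce) any torsion. The delicate part is the torsion comparison; the free rank is more or less forced by the Borel regulator once the map is in place. A secondary subtlety is that, unlike in the $\C$ case, the natural map $H_3(\SL(2,K)^\delta;\Z)\to \widehat\Bloch(K)$ may not be surjective on the nose (there can be auxiliary torsion from $\mu_K$), so one may need to work with $H_3(\SL(2,\overline K)^\delta;\Z)^{\Gal(\overline K/K)}$ as the intermediate object, and invoke the Merkuriev--Suslin theorem a second time to identify this with $K_3^{\text{\rm ind}}(K)$.
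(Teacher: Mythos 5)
First, a point of attribution: the paper does not prove this theorem at all --- it is quoted as a result of Zickert \cite{zickert3}, where the actual proof occupies a substantial part of that paper. So there is no ``paper proof'' for your argument to parallel, and your sketch would need to stand on its own; as it stands it has genuine gaps.

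The central gap is the Galois-descent step. For the ordinary (pre-)Bloch group a Galois automorphism does act by permuting symbols $[z]$, but the \emph{extended} Bloch group is built from the $\Z\times\Z$--cover of $\C-\{0,1\}$, i.e.\ a lifted symbol $[z;p,q]$ encodes choices of branches of $\log z$ and $\log(1-z)$. An element $\sigma\in\Gal(\overline K/K)$ acts on the cross-ratios $z$ but there is no induced action on the branch data, since $\sigma$ does not commute with the logarithm; so the assertion that ``each Galois automorphism just permutes the cross-ratio symbols and preserves the (lifted) five-term relation'' fails, and you cannot conclude that the image of a class of $\widehat\Bloch(K)$ in $K_3^{\text{\rm ind}}(\C)$ is Galois-fixed merely because the $z$'s lie in $K$. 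Indeed, even the independence of $\widehat\Bloch(K)$ from the chosen embedding $K\hookrightarrow\C$ is a \emph{consequence} of Zickert's theorem (via $K_3^{\text{\rm ind}}(K)$), not something available at the outset, so using it as an input is circular. A related circularity infects the second half: to know that the regulator (Borel plus Chern--Simons) is injective on $\widehat\Bloch(K)$ modulo torsion you essentially need the theorem you are proving (or at least that $\widehat\Bloch(K)\to\widehat\Bloch(\C)$ has only the controlled kernel), and even granting injectivity mod torsion on both sides, intertwining two regulators with full-lattice images only bounds the map's kernel --- it does not give surjectivity onto $K_3^{\text{\rm ind}}(K)$, which your sketch never addresses (the image could a priori have finite index). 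Finally, the torsion bookkeeping is not right as stated: by Merkurʹev--Suslin the torsion of $K_3^{\text{\rm ind}}(K)$ is the nontrivial extension $\widetilde{\operatorname{Tor}}(\mu_K,\mu_K)$, cyclic of order $2|\mu_K|$, governed by the roots of unity in $K$ itself, and matching this with the torsion of the symbolically defined $\widehat\Bloch(K)$ is precisely the delicate part of Zickert's argument, not something that follows from the $\C$ case by the maps you describe.
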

Moreover, as mentioned in the proof of Theorem \ref{th:in k}, for a
closed hyperbolic manifold with spin structure Zickert shows that
there is a natural invariant $[M]_K\in \widehat\B(k)=K_3^{\rm ind}(k)$
which lifts the Bloch invariant. If $M$ has cusps he shows that
$[M]_K$ is defined in $\widehat \Bloch(K)\otimes\Z[\frac12]$, where
$K$ is the trace field, and his arguments show that $8[M]_K$ is well
defined in $\widehat\Bloch(k)=K_3^{\rm ind}(k)$ (it is easily seen
that $4[M]_K$ is well defined in $K_3^{\rm ind}(\C)$).
\subsection{Mutation} 
If a hyperbolic manifold $M$ contains an essential $4$--punctured
sphere then one can cut along this embedded surface and re-glue by an
involution, and it is well known that this process, called
\emph{Conway mutation} or simply \emph{mutation}, yields a hyperbolic
manifold which shares many properties with $M$, for example it has the
same volume and Chern-Simons invariant \cite{ruberman}, and if $M$ was
a knot complement, many knot theoretic invariants are preserved
too. It is folk knowledge that the Bloch invariant is preserved (and
hence also $[M]_{\PSL}$) but there is no proof in the literature, so
we give a direct proof here. There are other types of mutation,
sometimes called ``generalized mutation.'' For example one can mutate
along any essential embedded $3$--punctured sphere.
\begin{theorem}
  If $M$ and $M'$ are hyperbolic manifolds related by Conway mutation,
  then $[M]_{\PSL}=[M']_{\PSL}$. 

If they are related by mutation on a
$3$--punctured sphere then $[M]_{\PSL}$ and $[M']_{\PSL}$ differ by
  the element of order 2 in $H_3(\PSL(2,\C)^\delta;\Z)$. 

For  any generalized mutation $[M]_{\PSL}$ and $[M']_{\PSL}$ differ by an
  element of finite order.  Every torsion element of
  $H_3(\PSL(2,\C)^\delta;\Z)$ arises this way.
\end{theorem}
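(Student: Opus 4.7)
The plan is to work in Zickert's decorated ideal triangulation model for $[M]_{\PSL}\in\widehat\Bloch_{\PSL}(\C)=H_3(\PSL(2,\C)^\delta;\Z)$ from \cite{zickert1,zickert2}, where $[M]_{\PSL}$ is computed as a sum of decorated (pre-)Bloch symbols. First, straighten $S$ in its isotopy class so it inherits its unique complete finite-area hyperbolic structure, and choose an ideal triangulation $\mathcal T$ of $M$ in which $S$ is a subcomplex triangulated by ideal $2$-simplices whose combinatorics is preserved by the mutation involution $\iota:S\to S$ (if necessary, refine on one side of $S$). Since cutting along $S$ and regluing by an isometry leaves each $3$-simplex of $\mathcal T$ unchanged, the \emph{same} list of decorated simplices computes both $[M]_{\PSL}$ and $[M']_{\PSL}$, up to how the face-identifications across $S$ permute the vertex orderings of the adjacent $3$-simplices. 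One therefore reads off
$$[M']_{\PSL}-[M]_{\PSL}=\delta(S,\iota),$$
where $\delta(S,\iota)\in H_3(\PSL(2,\C)^\delta;\Z)$ depends only on the combinatorics of $\mathcal T|_S$ and the permutation induced by $\iota$.

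For Conway mutation, the isometry group of the complete hyperbolic $4$-punctured sphere is $S_4$, and the three non-trivial Conway mutations $\iota,\iota',\iota''$ correspond to the three non-identity elements of the Klein $4$-group $V\subset S_4$. Choose $\mathcal T|_S$ to be $S_4$-invariant. The outer $S_3=S_4/V$-symmetry cyclically permutes the three involutions, forcing $\delta(S,\iota)=\delta(S,\iota')=\delta(S,\iota'')$; combined with $\delta(S,\iota)+\delta(S,\iota')+\delta(S,\iota'')=0$ (since $\iota\iota'\iota''=1$ returns $M$ to itself), this gives $3\delta(S,\iota)=0$, while $\iota^2=1$ gives $2\delta(S,\iota)=0$, so $\delta(S,\iota)=0$ and $[M]_{\PSL}=[M']_{\PSL}$.

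For the $3$-punctured sphere, the isometry group is $S_3$, mutation is by a transposition $\iota$, and $\iota^2=1$ forces $2\delta(S,\iota)=0$. Thus $\delta(S,\iota)$ lies in the unique order-$2$ subgroup of the torsion $\Q/\Z\subset H_3(\PSL(2,\C)^\delta;\Z)$ and is either trivial or the unique element of order $2$. A direct five-term-relation computation in Zickert's decorated model, applied to the standard two-triangle ideal triangulation of the $3$-punctured sphere (two ideal $2$-simplices glued along all three pairs of edges), under the transposition $\iota$ that swaps the triangles and relabels one edge-pair, shows $\delta(S,\iota)$ is nonzero and hence is the element of order $2$. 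For an arbitrary generalized mutation, $S$ has a finite hyperbolic isometry group and $\iota$ has finite order, so the same averaging argument forces $\delta(S,\iota)$ to be torsion.

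To complete the theorem we must exhibit, for each $n\ge 2$, a generalized mutation realizing a generator of the $\Z/n$ subgroup of $\Q/\Z$. The plan is to construct a hyperbolic manifold $M_n$ containing an essential surface $S_n$ admitting a free $\Z/n$-symmetry, for instance by taking a suitable finite cover of an arithmetic orbifold with a totally geodesic suborbifold of cyclic symmetry of order $n$, and then to mutate by an order-$n$ generator $\iota_n$. The main obstacle---and the most delicate part of the proof---is verifying that the resulting $\delta(S_n,\iota_n)$ has order \emph{exactly} $n$ rather than a proper divisor. Since $\cs$ is injective on the torsion subgroup of $H_3(\PSL(2,\C)^\delta;\Z)$, it suffices to compute $\cs(M_n)-\cs(M_n')\in\R/\pi^2\Z$ and check that its image in $\Q/\Z$ is primitive of order $n$; doing this for an explicit infinite family $M_n$ is where the real work of this direction lies.
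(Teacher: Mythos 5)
There are genuine gaps, and the central one is the very first step. You straighten the mutation surface $S$ to carry its ``complete finite-area hyperbolic structure'' and triangulate it by ideal $2$--simplices inside an ideal triangulation of $M$, then assert that cutting and regluing by $\iota$ leaves the decorated simplices unchanged, so that $[M']_{\PSL}-[M]_{\PSL}$ is a defect $\delta(S,\iota)$ ``depending only on the combinatorics.'' An essential $4$--punctured sphere (or genus-$2$ surface) in a hyperbolic $3$--manifold is in general quasi-Fuchsian, not totally geodesic, so it cannot be straightened and triangulated by ideal triangles sitting geodesically in $M$; and the fact that the mutant is even hyperbolic, with matching geometric data, is precisely the nontrivial content of Ruberman's theorem, not something you can read off by keeping ``the same list of decorated simplices.'' The claim that the defect is purely combinatorial and geometry-independent is exactly the crux, and it is asserted rather than proved. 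Your symmetry argument for the Conway case then compounds this: the full $S_4$ does not act by isometries on a generic hyperbolic $4$--punctured sphere (only the Klein $4$--group of hyperelliptic involutions does), so the ``outer $S_3$'' cannot be used to equate $\delta(S,\iota)=\delta(S,\iota')=\delta(S,\iota'')$; the relation $\delta(S,\iota)+\delta(S,\iota')+\delta(S,\iota'')=0$ presupposes an additivity of defects under composing mutations performed on \emph{different} manifolds that you never justify; and even $2\delta(S,\iota)=0$ needs an argument that mutating back contributes the same defect. Finally, the order-$2$ computation for the $3$--punctured sphere is only announced (``a direct five-term-relation computation shows\dots''), and the last clause of the theorem is left as a construction program whose hardest point you yourself flag as unresolved.

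For comparison, the paper's proof avoids all of this by a homological device: choosing the classifying maps $M\to K(\PSL(2,\C)^\delta,1)$ and $M'\to K$ to agree outside an invariant neighborhood $N$ of $\Sigma$, the difference $[M]_{\PSL}-[M']_{\PSL}$ is identified with the class of the mapping torus $T_\phi\Sigma$, with $\pi_1(\Sigma)\rtimes\Z$ represented via the isometry of $\H^3$ lifting $\Phi$. Since $T_\phi\Sigma$ is finitely covered by $\Sigma\times S^1$, this class is $n$--torsion (which already gives your third sentence with no averaging argument), it is detected by $\cs$, and by continuity of $\cs$ under equivariant deformation it depends only on $\Phi$ and not on the ambient geometry. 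The values in the first two sentences are then obtained not by formal symmetry but by a single numerical example each: the Conway and Kinoshita--Terasaka knots have equal Chern--Simons invariants (so the Conway defect is $0$), while the two orientations of the Whitehead link have $\cs=\pm\pi^2/4$ (so the $3$--punctured sphere defect is the order-$2$ element). The final statement is quoted from Meyerhoff--Ruberman, who realize every rational multiple of $\pi^2$ as a change of $\cs$ under generalized mutation. If you want to salvage your approach, you would at minimum need Ruberman's positioning theorem to get an isometric $\Z/2$ on a neighborhood of $S$, a proof that the defect depends only on $(\Sigma,\Phi)$, and then an honest computation in at least one example; at that point you have essentially reconstructed the paper's argument.
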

\begin{proof}
  Suppose we have an essential embedded two-sided surface
  $\Sigma\subset M$ which has a tubular neighborhood $N$ which admits
  a (necessarily finite order) isometry $\Phi$ which preserves
  orientation and sides of $\Sigma$.
  Denoting by $\phi\colon\Sigma\to \Sigma$ the restriction of $\Phi$,
  let $M'$ be the manifold obtained from $M$ by cutting along $\Sigma$
  and re-gluing by $\phi$. This is ``generalized mutation.'' If
  $\Sigma$ is a $3$-- or $4$--punctured sphere or a closed surface of
  genus $2$ then it can always be positioned to have a $\Z/2$--symmetry.

  Let $K$ be a $K(\PSL(2,\C)^\delta,1)$--space, so
  $H_3(K;\Z)=H_3(\PSL(2,\C)^\delta;\Z)$. The holonomy map
  $\alpha\colon\pi_1(M)\to \PSL(2,\C)$ induces a map
  $\alpha_\sharp\colon M\to K$ (well defined up to homotopy), and
  similarly we have $\alpha'_\sharp\colon M'\to K$. These maps can be
  chosen to agree outside the tubular neighborhood of
  $N\cong\Sigma\times[0,1]$ of $\Sigma$. They then give a map $N\cup
  -N\to K$. Thinking of $N\cup -N$ as
  $\Sigma\times[0,1]\cup-\Sigma\times[0,1]$, it is glued on one end by
  the identity and on the other end by $\phi$, so it is simply the
  mapping torus $T_\phi\Sigma$ of $\phi\colon\Sigma\to\Sigma$. Its
  fundamental group is the semidirect product $\pi_1(\Sigma)\rtimes
  \Z$ and it is represented into $\PSL(2,\C)$ by the homomorphism which
  on $\pi_1(\Sigma)$ is the restriction of $\alpha$ and on a generator
  of $\Z$ is an isometry of $\H^3$ which restricts to a lift to
  $\tilde N$ of the isometry $\Phi\colon N\to N$.

  The images in $H_3(K;\Z)=H_3(\PSL(2,\C)^\delta;\Z)$ of the
  fundamental classes of $M$, $M'$ and $T_\phi\Sigma$ clearly satisfy
  $[M]_{\PSL}-[M']_{\PSL}=[T_\phi\Sigma]_{\PSL}$.  Since
  $T_\phi\Sigma$ is $n$--fold covered by $\Sigma\times S^1$, where $n$
  is the order of $\phi$, the element $[T_\phi\Sigma]_{\PSL}$ is
  $n$--torsion, and is hence determined by the Chern-Simons
  invariant. It describes the change of $[M]_{\PSL}$ under the
  corresponding mutation. Moreover, it depends only on the finite
  order map
  $\Phi\colon N\to N$ and not on the geometry in a neighborhood of
  $N$, since if one deforms the geometry in an equivariant fashion
  then $cs([T_\Phi\Sigma])$ is a continuously varying $n$--torsion
  element in $\R/\pi^2\Z$, hence constant.  So to compute it one just needs to
  compute the change in Chern-Simons invariant for a single
  example. The following two examples thus complete the proof of the
  first two sentences of the theorem.

The Conway and Kinoshita-Teresaka knots, which are
  related by Conway mutation, both have Chern-Simons invariant
  $7.1925796077528967037240463\dots$ (mod $\pi^2$), while the two
  orientations of the Whitehead link are related by a three-punctured
  mutation and have Chern-Simons invariants $\pm \pi^2/4$.

  The final sentence of the theorem is by section 3 of
  \cite{meyerhoff-ruberman}, in which Meyerhoff and Ruberman give
  examples to show any change of Chern-Simons invariant by a rational
  multiple of $\pi^2$ arises by generalized mutation.
\end{proof}
The Riemannian Chern-Simons invariant, defined for a closed hyperbolic
manifold $M$, is a lift of $\cs(M)$ to an invariant defined modulo
$2\pi^2$. It is not uncommon to see claims in the literature that the
Chern-Simons invariant of a cusped manifold can be defined modulo
$2\pi^2$, but we have the following consequence of the above theorem:
\begin{theorem}
  There is no consistent definition of $\cs(M)$ which is well defined
  modulo $2\pi^2$ for cusped manifolds. 
\end{theorem}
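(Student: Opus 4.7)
The plan is to derive a contradiction from the Whitehead link example exhibited in the proof of the preceding theorem. The Whitehead link $W$ and its mirror $\overline W$ are related by a mutation $\mu$ along an essential 3-punctured sphere $\Sigma$ with a $\Z/2$-symmetry $\phi$, and the preceding theorem identifies the induced change in the $\PSL$-fundamental class as $[\mu(M)]_{\PSL}-[M]_{\PSL}=[T_\phi\Sigma]_{\PSL}$, the unique order-2 element $\tau$ of $H_3(\PSL(2,\C)^\delta;\Z)$, whose Cheeger--Simons value is $\pi^2/2$ in $\R/\pi^2\Z$. Crucially, the formula in the preceding theorem shows that this change depends only on the mutation data $(\Sigma,\phi)$ and not on the ambient manifold or the direction in which the mutation is applied.

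I would then suppose for contradiction that a consistent refinement $\cs(\,\cdot\,)\in\R/2\pi^2\Z$ exists for cusped manifolds, meaning that it depends only on oriented homeomorphism type, lifts the existing $\R/\pi^2\Z$-valued invariant, and is subject to the same naturality that forced the $H_3$-level mutation change to depend only on $(\Sigma,\phi)$. Consequently the refined mutation change $\cs(\mu(M))-\cs(M)\pmod{2\pi^2}$ also depends only on $(\Sigma,\phi)$; call this common value $\Delta$. Since $\Delta$ must lift $\pi^2/2\in\R/\pi^2\Z$, we have $\Delta\in\{\pi^2/2,\,3\pi^2/2\}$ modulo $2\pi^2$.

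Applying $\mu$ to $W$ yields $\overline W$, so $\cs(\overline W)-\cs(W)\equiv\Delta\pmod{2\pi^2}$; applying $\mu$ to $\overline W$ recovers $W$, since $\phi$ is an involution, so $\cs(W)-\cs(\overline W)\equiv\Delta\pmod{2\pi^2}$ as well. Adding these two relations gives $2\Delta\equiv 0\pmod{2\pi^2}$. However, each of the two possible lifts yields $2\Delta\equiv\pi^2\pmod{2\pi^2}$, contradicting $2\Delta\equiv 0$.

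The main obstacle is pinning down the correct notion of ``consistent''. The version I would adopt---invariance under oriented homeomorphism, lifting of the existing mod-$\pi^2$ invariant, and equality of the mutation change whenever the mutation data $(\Sigma,\phi)$ coincide---is natural and essentially forced by the geometric provenance of the identity $[\mu(M)]_{\PSL}-[M]_{\PSL}=[T_\phi\Sigma]_{\PSL}$ in the preceding theorem. Once this is granted, the proof reduces to the slogan that the order-2 element $\pi^2/2\in\R/\pi^2\Z$ admits no ``square root of zero'' lift to $\R/2\pi^2\Z$, so an involutive mutation cannot act consistently on any such refinement of $\cs$.
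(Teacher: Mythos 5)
Your proposal is correct and is essentially the paper's own argument, just spelled out in more detail: the paper's proof is precisely that three-punctured-sphere mutation is an involution changing $\cs$ by $\pi^2/2\pmod{\pi^2}$, and that no lift of this change to $\R/2\pi^2\Z$ can have order two, which is your $2\Delta\equiv\pi^2\not\equiv0\pmod{2\pi^2}$ contradiction. Your explicit discussion of why the mutation change must depend only on $(\Sigma,\phi)$ (so that it is the same in both directions) makes precise the ``consistency'' the paper leaves implicit.
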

\begin{proof}
  Mutation along a thrice-punctured sphere is an involution which
  changes $\cs$ by $\pi^2/2$. Such a change cannot lift to a an order
  two change modulo $2\pi^2$.  
\end{proof}
\subsection{Scissors Congruence}

\def\sc{\operatorname{\mathcal P}}
\def\dehnker{\operatorname{\mathcal D}}

Two hyperbolic manifolds $M_1$ and $M_2$ are \emph{scissors congruent}
if $M_1$ can be cut into finitely many (possibly partially ideal)
polyhedra which can be reassembled to form $M_2$. They are
\emph{stably scissors congruent} if there is some polyhedron $Q$ such
that $M_1+Q$ is scissors congruent to $M_2+Q$ (disjoint union). If
$M_1$ and $M_2$ are either both compact or both non-compact then
stable scissors congruence implies scissors congruence.  The following
follows easily from \cite{neumann-yang1} (see Theorem 7.2 of
\cite{cghn}):
\begin{theorem}\label{sc} Let $K$ be a field that contains the
  invariant trace fields of $M_1$ and $M_2$. Then
$M_1$ and $M_2$ are stably scissors congruent if and only if
 $\Borel(M_1)-\Borel(M_2)$ is the Borel
regulator of an element of $\Bloch(K\cap \R)$
\end{theorem}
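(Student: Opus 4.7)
The plan combines the geometric identification of scissors congruence from \cite{neumann-yang1} with Galois descent for $K_3^{\mathrm{ind}}$ and Borel regulator injectivity.

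First, by Theorem \ref{th:in k}, the Bloch invariants $\beta(M_i)$ lie in $\Bloch(K)$ modulo torsion (the factor $2$ in the closed case and any torsion being absorbed by stability), so $\beta(M_1) - \beta(M_2) \in \Bloch(K)$. The geometric input, from \cite{neumann-yang1} (see also Theorem 7.2 of \cite{cghn}), is that the stable hyperbolic scissors congruence class of a complete finite-volume hyperbolic $3$-manifold $M$ is, modulo torsion, the image of $\beta(M)$ under the projection of $\Bloch(\C)$ onto its anti-invariants under complex conjugation $\bar c$; this reflects the fact that $\bar c$ reverses the orientation of $\H^3$ so that $\bar c$-symmetric Bloch classes are stably scissors-trivial. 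Thus $M_1$ and $M_2$ are stably scissors congruent if and only if $\beta(M_1) - \beta(M_2)$ is $\bar c$-fixed in $\Bloch(\C)$, modulo torsion.

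Second, I translate $\bar c$-fixedness to a subfield statement. For $\alpha \in \Bloch(K) \subset \Bloch(\C)$, we have $\bar c(\alpha) \in \Bloch(\bar K)$, so $\bar c(\alpha) = \alpha$ forces $\alpha \in \Bloch(K) \cap \Bloch(\bar K)$. By Galois descent for $K_3^{\mathrm{ind}}$ (Merkurjev--Suslin \cite{m-suslin}, as used in the proof of Theorem \ref{th:in k}), this intersection agrees, modulo torsion, with $\Bloch(K \cap \bar K) = \Bloch(K \cap \R)$. Hence stable scissors congruence of $M_1$ and $M_2$ is equivalent to $\beta(M_1) - \beta(M_2) \in \Bloch(K \cap \R)$ modulo torsion.

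Third, by Theorem \ref{borel theorem} the Borel regulator $\Bloch(K)/\text{Torsion} \hookrightarrow \R^{r_2(K)}$ is injective. Therefore the condition $\beta(M_1) - \beta(M_2) \in \Bloch(K \cap \R)$ modulo torsion is equivalent to $\Borel(M_1) - \Borel(M_2)$ lying in the image of the composition $\Bloch(K \cap \R) \hookrightarrow \Bloch(K) \to \R^{r_2(K)}$ --- that is, being the Borel regulator of an element of $\Bloch(K \cap \R)$, as claimed.

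The main difficulty is the first step: the identification from \cite{neumann-yang1} of the stable scissors congruence class of a hyperbolic $3$-manifold with the $\bar c$-anti-invariant projection of its Bloch invariant. The remaining ingredients --- Galois descent for $K_3^{\mathrm{ind}}$ and Borel regulator injectivity --- are algebraic machinery already invoked earlier in the paper.
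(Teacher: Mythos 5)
The paper itself gives no argument for Theorem \ref{sc} beyond the citation of \cite{neumann-yang1} and Theorem 7.2 of \cite{cghn}, and your outline does follow the intended route: identify the stable scissors congruence class of a hyperbolic manifold with the image of its Bloch invariant in the $(-1)$--eigenspace of complex conjugation $\bar c$ on $\Bloch(\C)$, convert ``the difference is $\bar c$--fixed'' into membership in the Bloch group of a real subfield by Galois descent for $K_3^{\text{\rm ind}}$, and finish with injectivity of the Borel regulator modulo torsion (Theorem \ref{borel theorem}). Your first and third steps are essentially fine, up to torsion bookkeeping ($\Bloch(\C)$ is torsion free and the scissors congruence group is uniquely divisible, so the factor $2$ from Theorem \ref{th:in k} and other torsion cause no harm for the regulator statement, though ``absorbed by stability'' is not the right reason).

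The genuine gap is in your second step. First, the identity $K\cap\overline K=K\cap\R$ is simply false in general: whenever $K$ is stable under complex conjugation and non-real --- e.g.\ $K=\Q(i)$, a very common invariant trace field --- one has $K\cap\overline K=K$, while $K\cap\R$ is a proper real subfield. Second, and more seriously, replacing the hypothesis ``$\bar c(\alpha)=\alpha$'' by ``$\alpha\in\Bloch(K)\cap\Bloch(\overline K)$'' throws away exactly the information the descent needs: for $K=\Q(i)$ that intersection is all of $\Bloch(\Q(i))$, which has rank $1$, whereas $\Bloch(K\cap\R)=\Bloch(\Q)$ is torsion, so your asserted identification ``modulo torsion'' would force $\Bloch(\Q(i))$ to be torsion, i.e.\ every manifold with invariant trace field $\Q(i)$ would have vanishing Borel regulator --- absurd. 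The correct descent must be performed over a conjugation-stable field: pass to $L=K\overline K$ (or a Galois closure), on which $\bar c$ restricts to an automorphism of order at most $2$ with fixed field $L\cap\R$; Merkurjev--Suslin descent for $K_3^{\text{\rm ind}}(L)$ under $\Gal(L/L\cap\R)$ then places the $\bar c$--fixed class in $\Bloch(L\cap\R)$ up to torsion, and a further descent argument (taking invariants in a Galois closure under the group generated by the subgroups fixing $K$ and fixing $L\cap\R$, whose fixed field is $K\cap\R$) brings it down to $\Bloch(K\cap\R)$, at least rationally. Finally, note that purely rational statements only show that a multiple of $\Borel(M_1)-\Borel(M_2)$ lies in the lattice $\Borel(\Bloch(K\cap\R))$; to obtain the statement as written you should invoke the integral form of Merkurjev--Suslin descent for $K_3^{\text{\rm ind}}$ and track the finite discrepancies between $K_3^{\text{\rm ind}}$ and the Bloch group, or state the conclusion up to such finite indeterminacy.
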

\noindent In particular, if $K\cap \R$ is totally real (as is
``usually'' the case) then scissors congruence
class of $M$ is not only determined by $\Borel(M)$ but also determines
it.



\medskip As discussed in \cite{neumann-hilbert}, the following
conjecture would be a consequence of Conjecture \ref{rama}.
\begin{conjecture} The stable scissors congruence class of $M$ is
  determined by $\vol(M)$. \end{conjecture}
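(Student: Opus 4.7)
The plan is to derive this conjecture from Conjecture~\ref{rama} via Theorem~\ref{sc}. Suppose $\vol(M_1)=\vol(M_2)$ (the converse implication is immediate). I would first choose a number field $K$ containing both invariant trace fields $k(M_1)$ and $k(M_2)$, stable under complex conjugation, and with its concrete embedding into $\C$ compatible with the geometric ones. By Theorem~\ref{sc} it suffices to show that $\Borel(M_1)-\Borel(M_2)\in\R^{r_2(K)}$ lies in $\Borel(\Bloch(K\cap\R))$. The geometric-embedding coordinate of $\Borel$ is $\pm\vol$, so this coordinate of the difference vanishes by hypothesis; the task is to extend the vanishing to a statement modulo $\Borel(\Bloch(K\cap\R))$ on all coordinates.

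Set $\eta:=[M_1]_{\PSL}-[M_2]_{\PSL}\in H_3(\PSL(2,\C);\Z)$, so $\vol(\eta)=0$. Complex conjugation induces an involution $c_*$ on $H_3(\PSL(2,\C);\Z)$ which, by direct inspection using the cross-ratio description of ideal simplices and the Wigner dilogarithm formula, preserves the Chern-Simons invariant (the real part of $\hat c$) but negates the volume. Hence $c_*(\eta)$ has the same Chern-Simons invariant as $\eta$ and likewise zero volume, and the injectivity of $\hat c$ asserted by Conjecture~\ref{rama} then forces $c_*(\eta)=\eta$. Since each $[M_i]_{\PSL}$ already lies in the image of $H_3(\PSL(2,K);\Z)$, so does $\eta$; after enlarging $K$ if necessary, I may arrange a $c$-invariant preimage there (up to torsion, using the injectivity again to average).

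The hard part is the descent step: showing that the $c_*$-invariant part of the image of $H_3(\PSL(2,K);\Z)$ in $H_3(\PSL(2,\C);\Z)$ is, modulo torsion, exactly the image of $H_3(\PSL(2,K\cap\R);\Z)$. This is essentially Galois descent for $K_3^{\rm ind}$ applied to the at-most-quadratic extension $K/(K\cap\R)$, using Merkurjev--Suslin's theorem (the same tool invoked in the proof of Theorem~\ref{th:in k}); the translation to the Bloch-group side goes through Zickert's isomorphism $\widehat\Bloch(F)\cong K_3^{\rm ind}(F)$. Once this descent is in hand, $\eta$ is, up to torsion, the image of some $\xi\in\Bloch(K\cap\R)$, so its Borel regulator equals $\Borel(M_1)-\Borel(M_2)$, and Theorem~\ref{sc} completes the argument. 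A subtlety that will need care is that $\Bloch(K\cap\R)$ may have both torsion and free contributions (depending on whether $K\cap\R$ has complex embeddings), and one must verify that the residual torsion ambiguity from the Galois-descent step is absorbed harmlessly by $\Borel$, which kills torsion.
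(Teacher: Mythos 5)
The statement you are addressing is a \emph{conjecture}: the paper offers no proof of it, only the remark (with a pointer to \cite{neumann-hilbert}) that it would follow from Conjecture \ref{rama}. Your argument invokes Conjecture \ref{rama} in an essential way (``the injectivity of $\hat c$ \dots then forces $c_*(\eta)=\eta$''), so what you have written is at best a sketch of the implication ``Conjecture \ref{rama} $\Rightarrow$ this conjecture,'' not a proof of the statement itself. That implication is exactly what the paper already asserts, so you have not gone beyond it, and you should flag clearly that your conclusion is conditional on an open conjecture.

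Even as a conditional argument, your route through Theorem \ref{sc} and Galois descent is more roundabout than the standard one and leaves real loose ends. The descent step you call ``the hard part'' only works rationally (or modulo torsion): from a $\Gal(K/K\cap\R)$--invariant class mod torsion you can only conclude that some integer multiple $N\eta$ has Borel regulator in $\Borel(\Bloch(K\cap\R))$, whereas Theorem \ref{sc} requires $\Borel(M_1)-\Borel(M_2)$ itself to lie there; $\Borel(\Bloch(K\cap\R))$ is a lattice in the subspace it spans, not divisible, so this gap must be closed by some additional input (e.g.\ the unique divisibility of the scissors congruence group). The cleaner and essentially standard derivation, discussed in \cite{neumann-hilbert}, bypasses number fields entirely: the difference of stable scissors congruence classes of $M_1$ and $M_2$ lies in the kernel of the Dehn invariant, which is identified with the $(-1)$--eigenspace of complex conjugation on $\Bloch(\C)$; on that eigenspace $\cs$ takes values in the $2$--torsion of $\R/\pi^2\Z$ (since $\cs\circ c_*=\cs$ while $\cs(-x)=-\cs(x)$), so injectivity of $\hat c$ forces $\vol$ to be injective there, and $\vol(M_1)=\vol(M_2)$ then kills the difference. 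Your observation that $c_*$ preserves $\cs$ and negates $\vol$ is correct and is the right starting point, but the machinery of Theorem \ref{sc}, Merkurjev--Suslin descent and Zickert's isomorphism is unnecessary for this implication and introduces the torsion and divisibility issues noted above.
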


In view of the above theorem this conjecture is amenable to experimentation with
\snap; all the evidence from this is positive.

\section{Realizing invariants}\label{sec:realizing}

We know of no way to generate examples of manifolds with given Bloch
invariant. The program \snap\ enables extensive
experimentation---basically casting a fishing line in an ocean of
examples---but even for quadratic number fields, only a few fields
allow manifolds of small enough volume that they can easily
be caught this way.

Moreover, to try to realize such fine arithmetic invariants, one must
first realize any non-real number field as an invariant trace
field. Here we address this question, and that of realizing a
quaternion algebra.  The only general result known in this direction
is a result first observed by Reid and the author, described in
\cite{mr}, that any non-real multi-quadratic extension of $\Q$ can be
realized.

Let $k$ be number field, $A$ a quaternion algebra over $k$,
$\O$ an  order in $A$, and $\Gamma$ a torsion free subgroup of
finite index in $\O^1$. Then, as described in subsection
\ref{subsec:arith}, each complex embedding of $k$ induces a map
$\Gamma\to \PSL(2,\C)$, each real embedding at which $A$ is unramified
induces a map $\Gamma\to \PSL(2,\R)$ and, via these maps, $\Gamma$
acts discretely with finite co-volume on the product
$$\X:=\prod_{i=1}^{r_2}\H^3\times\prod_{j=1}^{r_1^{u}}\H^2$$ of copies
of $\H^3$ and $\H^2$ (here $r_1^{u}$ is the number of real places of
$k$ at which $A$ is unramified). Denote $Y=\X/\Gamma$. Each projection
of $\X$ to one of the $\H^3$ factors gives a codimension $3$ foliation
on $\X$ which is preserved by the $\Gamma$--action, so $Y$ inherits a
codimension $3$ foliation from each of these projections. This is a
transversally hyperbolic foliation: there is a metric on the normal
bundle of the foliation which induces a hyperbolic metric on any local
transverse section.  Similarly, each projection to $\H^2$ gives a
codimension $2$ transversally hyperbolic foliation.

Now assume that $k$ is a concrete non-real number field, i.e., it
comes with a particular complex embedding singled out (which we call
the \emph{concrete embedding}). Pick the corresponding codimension $3$
foliation $\F$. Let $M^3\to Y$ be an immersion of a $3$-manifold to
$Y$ which is everywhere transverse to $\F$. So $M^3$ has an induced
hyperbolic metric. If $M^3$ is compact this metric is, of course,
complete of finite volume. We are interested also in the case that
$M^3$ is not compact, but we require then that the metric be complete
of finite volume (as we will see, this can only happen if $A$ is
 unramified over $k$).

\begin{theorem}
  The invariant trace field and quaternion algebra for $M^3$ embed in
  $k$ resp.\ $A$ (as concrete field and quaternion algebra). Moreover,
  $M$ has integral traces.

  Conversely, up to commensurability, every finite volume hyperbolic
  $3$--manifold with invariant quaternion algebra in $A$ (and hence
  invariant trace field in $k$) and with integral traces occurs this
  way.
\end{theorem}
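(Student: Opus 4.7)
The forward direction proceeds by analyzing the developing map. Lift the immersion $f\colon M^3\to Y$ to universal covers $\tilde f\colon \tilde M^3\to\X$. Transversality of $f$ to $\F$ means that the composition with the first-factor projection $p\colon\X\to\H^3$ is a local diffeomorphism, and the pullback of the hyperbolic metric on $\H^3$ is precisely the induced hyperbolic metric on $M^3$. Completeness of that metric together with simple-connectedness of $\H^3$ forces $p\circ\tilde f$ to be a covering, hence a diffeomorphism $\tilde M^3\cong\H^3$. Under this identification the deck action of $\pi_1(M^3)$ is the composition
$$\pi_1(M^3)\to\Gamma\subset\O^1\to\SL(2,\C)\to\PSL(2,\C),$$
where the map $\O^1\to\SL(2,\C)$ is induced by the concrete complex embedding $\sigma$ of $k$; this is the holonomy of $M^3$. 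Since $\O\subset\O_E$, every element of $\Gamma$ has reduced trace in $\O_k$, so all holonomy traces lie in $\sigma(\O_k)$, yielding integral traces and $\tr(M^3)\subset k$. The images in $\SL(2,\C)$ of squared holonomy lifts lie in $\sigma(A)\subset M_2(\C)$, so $A(M^3)\subset A$ and $k(M^3)\subset k$ as concrete algebras.

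For the converse, let $M=\H^3/\Lambda$, working up to commensurability throughout. Pass to the commensurable subgroup $\Lambda^{(2)}$ and lift it to $\tilde\Lambda^{(2)}\subset\SL(2,\C)$ via a choice of spin structure; by definition the matrices lie in $A(M)\subset A$. Integrality of traces forces their entries to be integral over $\O_k$, so the $\O_k$-subring of $A$ that $\tilde\Lambda^{(2)}$ generates is a rank-four lattice, and hence is contained in some order $\O$ of $A$. Thus $\tilde\Lambda^{(2)}\subset\O^1$. By Selberg's lemma $\O^1$ contains a torsion-free finite-index subgroup $\Gamma$; intersecting and renaming, we may assume $\tilde\Lambda^{(2)}\subset\Gamma$, and then form $Y=\X/\Gamma$ exactly as in the theorem.

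The remaining task is to build a $\Lambda$-equivariant section $s\colon\H^3\to\X$ of the first-factor projection, where $\Lambda$ acts on $\X$ through $\Gamma$ and on $\H^3$ via $\sigma$. Equivalently, one asks for a section of the associated fiber bundle $\X/\Lambda\to\H^3/\Lambda=M$, whose fiber $F=\prod_{i=2}^{r_2}\H^3\times\prod_{j=1}^{r_1^u}\H^2$ is contractible. Since $F$ has vanishing homotopy in all degrees, the obstructions to a section in $H^{k+1}(M;\pi_k(F))$ all vanish, so a section exists. Descending $s$ and composing with the finite covering $\X/\Lambda\to Y$ yields an immersion $M\to Y$, and transversality to $\F$ is automatic because the image of a section of $p$ is a complement to the tangent spaces of the leaves.

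The main obstacle is really the first step of the converse: translating the hypotheses ``$A(M)\subset A$'' and ``integral traces'' into a genuine inclusion $\tilde\Lambda^{(2)}\subset\Gamma$. One must keep track of the possible gap $[k:k(M)]$ (which is why one passes to $\Lambda^{(2)}$ rather than $\Lambda$), of the choice of spin structure needed to lift to $\SL(2,\C)$, and of the passage from the $\O_k$-subring generated by a finite generating set of $\tilde\Lambda^{(2)}$ to a genuine order of $A$; each costs only a finite-index step, which is permitted since the conclusion is stated up to commensurability. Once $\tilde\Lambda^{(2)}$ sits inside $\Gamma$, the developing-map argument in the forward direction and the contractibility-of-$F$ argument for the section in the converse are each essentially a single line.
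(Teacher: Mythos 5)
Your overall strategy is the same as the paper's: the forward direction via lifting to $\X$ and observing that transversality plus completeness makes the projection to the concrete $\H^3$ factor an isometry of $\tilde M$ onto $\H^3$, so $\pi_1(M)$ sits inside $\Gamma$; the converse via placing (a finite-index, lifted version of) the holonomy group inside $\O^1$ for some order and then filling in the remaining coordinates of a map to $\X$ using contractibility of the other factors. Your "section of the flat bundle $\X/\Lambda\to M$ with contractible fiber" is exactly the obstruction-theoretic content of the paper's lemma producing equivariant maps $\tilde M\to\H^3$, $\tilde M\to\H^2$ for the non-concrete places, so that part is a reformulation rather than a different route.

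One step, however, is justified by a false claim: integrality of traces does \emph{not} force the matrix entries of $\tilde\Lambda^{(2)}$ to be integral over $\O_k$ (conjugating a group by a non-integral matrix preserves all traces while making entries arbitrarily non-integral), so you cannot conclude "rank-four lattice" that way. The correct statement — and the actual crux of the converse — is that for a \emph{finitely generated} subgroup of $\SL(2,\C)$ with integral traces, the $\O_k$-span of the group is a finitely generated $\O_k$-module, hence an order; this is proved via trace identities, and the paper simply invokes the proof of Theorem 8.3.2 of \cite{mr} for it. Your final paragraph flags this passage as needing care but describes it as "a finite-index step," which it is not: it is an algebraic finiteness statement, not something you can arrange by passing to a subgroup. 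Two further small points: the covering $\X/\Lambda\to Y$ is in general of infinite degree (harmless, since any covering map is a local diffeomorphism, so the composite is still a transverse immersion); and since the theorem's $Y$ is built from a fixed order, you should not build a fresh $Y$ from your own order but rather use that any two orders in $A$ are commensurable and pass to a further finite cover of $M$ so that your group lands in the originally chosen $\Gamma$ — this is how the paper phrases it, and it is what "occurs this way, up to commensurability" requires.
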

\begin{proof} Suppose first that $M\hookrightarrow Y$ is as described
  in the theorem. Then $M$ inherits a hyperbolic metric locally from
  the metric transverse to the foliation $\F$. Consider one component
  $\tilde M\to \X$ of the pullback to the universal cover $\X$ of
  $Y$. By assumption the projection $\X\to\H^3$ to the first factor
  restricts to a proper local isometry, hence an isometry, of $\tilde
  M$ to $\H^3$. It follows that $M=\tilde M/\Gamma_0=\H^3/\Gamma_0$,
  where $\Gamma_0$ is a subgroup of the group $\Gamma$. The invariant
  trace field and invariant quaternion algebra of $\Gamma_0$ therefore
  embed in the invariant trace field $k$ and invariant quaternion
  algebra $A$ of $\Gamma$.

  Conversely, suppose $M$ has invariant trace field in $k$ and
  invariant quaternion algebra in $A$ and has integral traces. By
  going to a finite cover if necessary, we can assume the trace field
  of $M$ is in $k$. Write $M=\H^3/\Delta$ with $\Delta\subset
  \PSL(2,\C)$ and let $\overline \Delta$ be the inverse image of
  $\Delta$ in $\SL(2,\C)$. Then $A$ is the $k$--subalgebra of
  $M_2(\C)$ generated by $\overline\Delta$. The subring $\O\subset A$
  consisting of $\O_k$--linear combinations of elements of $\overline
  \Delta$ is an order in $A$, and $\overline \Delta\subset\O^1$ (see
  the proof of Theorem 8.3.2 in \cite{mr}).  Now any two orders in $A$
  are commensurable, so by going to a finite cover of $M$ if necessary
  we can assume that the order $\O$ we have here is contained in the
  order we used to construct $Y$, and therefore that $\Delta\subset
  \Gamma$. 


The developing map $\tilde M\to \H^3$  is
$\Delta$--equivariant for the action of $\Delta=\pi_1(M)$ by covering
transformations on $\tilde M $ and the given action of $\Delta$ on
$\H^3$. The latter is induced from the inclusion of $\bar\Delta$ in
$\O^1\subset SL(2,\C)$ coming from the concrete embedding $k\to \C$.

 Now the non-concrete complex embeddings of $k$ give actions of
$\Delta=\pi_1(M)$ on $\H^3$ which are not discrete. But, by Lemma
\ref{equivariant} below, 
for each of these we can construct a smooth $\Delta$--equivariant map
$\tilde M\to \H^3$. Similarly, we construct smooth equivariant maps
$\tilde M\to\H^2$ for each unramified real embedding. Together
these maps give a $\Delta$-equivariant map of $\tilde M$ to
$\X=\prod_{i=1}^{r_2}\H^3\times\prod_{j=1}^{r_1^{u}}\H^2$, where
$\Delta$ acts on $\X$ as a subgroup of $\Gamma$.  We thus get an
induced map of $\tilde M/\Delta=M$ to $\X/\Gamma=Y$, which clearly
does what is required.
\end{proof}
We used the following well known lemma:
\begin{lemma}\label{equivariant}
  If $X$ is a simplicial or CW-complex then for any action of
  $\pi_1(X)$ on a contractible space $Y$ there is a
  $\pi_1(X)$--equivariant map of $\tilde X$ to $Y$, and it is unique
  up to equivariant homotopy. Moreover, if $X$ and $Y$ are smooth
  manifolds and the action of $\pi_1(X)$ on $Y$ is by diffeomorphisms
  then this map can be chosen to be smooth.
\end{lemma}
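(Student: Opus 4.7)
The plan is to prove the lemma by the standard equivariant obstruction-theoretic induction over skeleta of $\tilde X$, using contractibility of $Y$ to extend across cells at every stage. All the work takes place at the level of a single chosen representative per $\pi_1(X)$-orbit of cells, with the extension to the full orbit forced by equivariance.

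First I would fix a CW-structure on $X$ and lift it to a CW-structure on $\tilde X$ on which $\pi_1(X)$ acts freely by cellular homeomorphisms permuting the cells. Choose, in each $\pi_1(X)$-orbit of cells of $\tilde X$, a single representative cell; call this set $\Sigma$. I then build the map $f \colon \tilde X \to Y$ by induction on dimension. For $0$-cells $e\in\Sigma$, pick $f(e)\in Y$ arbitrarily and extend equivariantly to the orbit by $f(\gamma\cdot e):=\gamma\cdot f(e)$ for $\gamma\in\pi_1(X)$; this is unambiguous because the action on $0$-cells is free. For the inductive step, suppose $f$ has been defined equivariantly on the $(n-1)$-skeleton. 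Given an $n$-cell $e\in\Sigma$ with characteristic map $\phi\colon D^n\to\tilde X$, the composite $f\circ\phi|_{\partial D^n}\colon S^{n-1}\to Y$ is null-homotopic since $Y$ is contractible, so it extends to a map $D^n\to Y$; this extension defines $f$ on the interior of $e$, and we then propagate by $f(\gamma\cdot x):=\gamma\cdot f(x)$. Freeness of the action on open cells ensures this is well-defined, and by construction $f$ is continuous, equivariant, and cellular.

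For uniqueness up to equivariant homotopy, given two equivariant maps $f_0, f_1\colon\tilde X\to Y$, I would apply the same construction to $\tilde X\times I$ with the product CW-structure (and diagonal $\pi_1(X)$-action, trivial on $I$), starting from the equivariant map $\tilde X\times\{0,1\}\to Y$ given by $f_0$ and $f_1$. The same skeletal induction, again powered by contractibility of $Y$, produces an equivariant homotopy.

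For the smooth refinement, replace the CW-structure by a smooth triangulation of $X$ (which exists by Whitehead); then $\tilde X$ inherits a smooth triangulation on which $\pi_1(X)$ acts by diffeomorphisms permuting simplices. Carry out the same induction, but at each stage extend smoothly: given a smooth equivariant map on the $(n-1)$-skeleton, the boundary map $f\circ\phi|_{\partial D^n}\colon S^{n-1}\to Y$ is smooth, extends continuously to $D^n\to Y$ by contractibility, and then a standard smooth approximation theorem (approximation relative to a closed set, applied to a map into the smooth manifold $Y$) yields a smooth extension agreeing with the given boundary values. Propagating by $\pi_1(X)$-action preserves smoothness because the action is by diffeomorphisms. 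The same trick with $\tilde X\times I$ gives smooth equivariant homotopy uniqueness.

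The argument is completely standard and I anticipate no serious obstacle; the only mildly delicate point is the smooth extension step, which is handled by invoking smooth approximation rel boundary rather than trying to produce a smooth null-homotopy of $Y$ directly (which would need extra hypotheses on $Y$ beyond mere contractibility).
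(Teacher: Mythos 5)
Your construction of the continuous equivariant map and the uniqueness statement follow essentially the same route as the paper: induct over the skeleta of $\tilde X$, choose one cell per $\pi_1(X)$--orbit, extend over that cell using contractibility of $Y$, propagate by equivariance (freeness of the deck action makes this well defined), and obtain uniqueness by running the same induction on $\tilde X\times I$. That part is fine.

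The smooth refinement, however, has a genuine gap, and it sits exactly at the point you flag as ``mildly delicate.'' Extending cell by cell and invoking relative smooth approximation on each closed cell $D^n$ rel $\partial D^n$ only produces a map that is smooth on the open cells and continuous globally; for the glued map to be smooth as a map out of the manifold $\tilde X$, the derivatives of the pieces meeting along the $(n-1)$--skeleton must agree to all orders in directions transverse to the skeleton, and nothing in your construction arranges this -- the result is in general only piecewise smooth, not even $C^1$ across simplex boundaries. (Moreover, the relative Whitney approximation theorem requires the map to be smooth on a neighborhood of the closed set, not merely on the set itself, which again is not available if you only control the map on the skeleton.) The paper sidesteps this by never working with the bare skeleton in the smooth case: one inducts over thin neighborhoods of the skeleta of a smooth triangulation, so that at every stage the map is defined and smooth on an open set. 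For a chosen lift of each $k$--simplex one extends the map, already smooth on a neighborhood of the boundary of that simplex, smoothly over a neighborhood of the whole simplex (extend continuously by contractibility, then smooth relative to a closed subneighborhood on which it is already smooth), and then transports the result by the group action, which preserves smoothness since $\pi_1(X)$ acts by deck transformations on $\tilde X$ and by diffeomorphisms on $Y$. Your argument becomes correct if you replace the skeleta by such neighborhoods throughout the induction.
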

\begin{proof}
  Indeed, one constructs the map inductively over skeleta of $\tilde
X$. If $X$ is smooth one can triangulate $X$ and construct the smooth
map inductively over thin neighborhoods of the skeleta. At the $k$-th
step one chooses a lift of each $k$--simplex and first extends the
smooth map already defined on a neighborhood of the boundary of this
lifted $k$--simplex smoothly to a neighborhood of the whole
$k$--simplex and then defines the map on $\pi_1(X)$--images of this
neighborhood by equivariance.
\end{proof}

One can extend the theorem to remove the restriction that the
hyperbolic manifold have integral traces. For each prime $\frak p
\subset \O_k$ which one wishes to allow in denominators of traces one
should add the corresponding Bass-Serre tree for $\frak p$ (see, e.g.,
\cite[Chapter VI]{smith conjecture}) as a factor on the right side of
the product of factors defining $\X$.  Then $Y$ is no longer a
manifold, but the foliation is still defined and any transversal to it
will be a manifold.

The theorem applies also to hyperbolic surfaces. Suppose $k$ has a
chosen real place (and is no longer required to have at least one
complex place) and $A$ is now unramified at this chosen real place.
We consider the foliation of $Y$ given by projecting $\X$ to the
corresponding $\H^2$ factor. An immersion $M^2\to Y$ transversal to
this foliation will induce a hyperbolic structure on $M^2$ with
invariant quaternion algebra in the concrete quaternion algebra $A$,
and again, any finite volume hyperbolic surface with integral traces
and invariant quaternion algebra in $A$ occurs this way up to
commensurability.  As before, the integral trace restriction can be
avoided by allowing also some Bass-Serre tree factors in $\X$.

In the $2$-dimensional case the existence of surfaces with given
invariant trace field can often be shown. For example, explicit
computation for the character varieties of ``small'' surfaces point to
the lack of restriction on what fields occur. And in at least one case
the existence of the transversals in $Y$ has been shown directly: if
$k=\Q(\sqrt d)$ is a real quadratic field and $\Gamma=\PSL(2,\O(\sqrt
d))$ (so $A$ is totally unramified) then $Y=\H^2\times\H^2/\Gamma$ is
a Hilbert modular surface and Hirzebruch and Zagier \cite{hz}
constructed many Riemann surfaces in $Y$ which are transverse to both
foliations. In this case these surfaces are all arithmetic, so they
give nothing new (they have invariant trace field $\Q$ and quaternion
algebra ramified at a possibly empty set of finite places, but
becoming unramified on extending scalars to $\Q(\sqrt d)$).

In the $3$-dimensional case the best evidence that such transversals
might always exist may be the richness of the collection of fields and
quaternion algebras provided by the 3-manifold census and snap, plus
the fact that their existence seems very likely in the 2-dimensional
case.

There is a $3$--dimensional foliation of $Y$ transverse to $\mathcal
F$, provided by the projection of $\X$ to the factors other than the
one used to construct $\mathcal F$. W. Thurston (private
communication) has suggested that one might seek immersed 3-manifolds which
are everywhere almost tangent to this foliation (and hence transverse
to $\mathcal F$). This would be very
interesting from the point of view of realizing Bloch invariants,
since it would realize Bloch invariants for which the components
other than the ``concrete component'' (giving $\vol(M)$) of the Borel
regulator are small with respect to volume.

\end{document}